\newtheorem{theo}{Theorem}
\newtheorem*{conjA}{Conjecture A}
\newtheorem*{conjB}{Conjecture B}
\newtheorem*{conjC}{Conjecture C}
\newtheorem{propo}{Proposition}
\newtheorem{lemme}{Lemma}
\theoremstyle{remark}
\newtheorem*{Remark}{Remark}
\numberwithin{equation}{section}
\author{\'E. DELAYGUE}
\title{Arithmetic properties of Ap\'ery-like numbers}
\date{}
\subjclass[2010]{Primary 11B50; Secondary 11B65 05A10}
\keywords{Ap\'ery numbers, Constant terms of powers of Laurent polynomials, 
$p$-Lucas property, Congruences}
\thanks{Research supported by the project Holonomix (PEPS CNRS INS2I 2012).}
\begin{document}
\maketitle

\begin{abstract}
We provide lower bounds for $p$-adic valuations of multisums of factorial ratios which satisfy an Ap\'ery-like recurrence relation: these include Ap\'ery, Domb, Franel numbers, the numbers of abelian squares over a finite alphabet, and constant terms of powers of certain Laurent polynomials. In particular, we prove Beukers' conjectures on the $p$-adic valuation of Ap\'ery numbers. Furthermore, we give an effective criterion for a sequence of factorial ratios to satisfy the $p$-Lucas property for almost all primes $p$.
\end{abstract}

\section{Introduction}

\subsection{Classical results of Lucas and Kummer}

It is a well-known result of Lucas \cite{Lucas} that, for all nonnegative integers $m,n$ and all primes $p$, we have
\begin{equation}\label{RealLucas}
\binom{m}{n}\equiv\prod_{i=0}^k\binom{m_i}{n_i}\mod p,
\end{equation}
where $m=m_0+m_1p+\cdots+m_kp^k$ and $n=n_0+n_1p+\cdots+n_kp^k$ are the base $p$ expansions of $m$ and $n$. 

In particular, a prime $p$ divides the binomial $\binom{m}{n}$ if, and only if there is $0\leq i\leq k$ such that $m_i<n_i$. Precisely, Kummer proved in \cite{Kummer} that, for all natural integers $m\geq n$, the $p$-adic valuation (\footnote{The $p$-adic valuation of an integer $m$ is the maximum integer $\beta$ such that $p^\beta$ divides $m$.}) of the binomial $\binom{m}{n}$ is the number of carries which occur when $n$ is added to $m-n$ in base $p$. As a consequence, we have
\begin{equation}\label{OldConj}
\binom{m}{n}\in p^\alpha\mathbb{Z},\quad\textup{where}\quad\alpha=\#\left\{0\leq i\leq k\,:\,\binom{m_i}{n_i}=0\right\}.
\end{equation}

In this article, we show that many sequences of Ap\'ery-like numbers satisfy congruences similar to \eqref{RealLucas}, that is, for all nonnegative integers $n$ and all primes $p$, we have
$$
A(n)\equiv\prod_{i=0}^k A(n_i)\mod p,
$$
where $n=n_0+n_1p+\cdots+n_kp^k$ is the base $p$ expansion of $n$. Furthermore, we prove that an analogue of \eqref{OldConj} holds for those numbers, that is
$$
A(n)\in p^\alpha\mathbb{Z},\quad\textup{where}\;\alpha=\#\big\{0\leq i\leq k\,:\,A(n_i)\equiv 0\mod p\big\},
$$
which proves Beukers' conjectures on the $p$-adic valuation of Ap\'ery numbers.

\subsection{Beukers' conjectures on Ap\'ery numbers}

For all natural integers $n$, we set 
$$
A_1(n):=\sum_{k=0}^n\binom{n}{k}^2\binom{n+k}{k}^2\quad\textup{and}\quad A_2(n):=\sum_{k=0}^n\binom{n}{k}^2\binom{n+k}{k}.
$$

Those sequences were used in $1979$ by Ap\'ery in his proofs of the irrationality of $\zeta(3)$ and $\zeta(2)$ (see \cite{Ap\'ery}). In the $1980$'s, several congruences satisfied by those sequences were demonstrated (see for example \cite{BeukersAp1}, \cite{BeukersAp2}, \cite{Chowla}, \cite{Gessel}, \cite{Mimura}). In particular, Gessel proved in \cite{Gessel} that $A_1$ satisfies the $p$-Lucas property for all prime numbers $p$, that is, for any prime $p$, all $v$ in $\{0,\dots,p-1\}$ and all natural integers $n$, we have
$$
A_1(v+np)\equiv A_1(v)A_1(n)\mod p.
$$
Thereby, if $n=n_0+n_1p+\cdots+n_Np^N$ is the base $p$ expansion of $n$, then we obtain
\begin{equation}\label{debase}
A_1(n)\equiv A_1(n_0)\cdots A_1(n_N)\mod p.
\end{equation}
In particular, $p$ divides $A_1(n)$ if and only if there exists $k$ in $\{0,\dots,N\}$ such that $p$ divides $A_1(n_k)$. Beukers stated in \cite{Beukers} two conjectures, when $p=5$ or $11$, which generalize this property (\footnote{If $p$ is $2$, $3$ or $7$, then for all $v$ in $\{0,\dots,p-1\}$, $A_1(v)$ is coprime to $p$ so that, according to \eqref{debase}, for all natural integers $n$, $A_1(n)$ is coprime to $p$.}). Before stating these conjectures, we observe that the set of all $v$ in $\{0,\dots,4\}$ (respectively $v$ in $\{0,\dots,10\}$) satisfying $A_1(v)\equiv 0\mod 5$ (respectively $A_1(v)\equiv 0\mod 11$) is $\{1,3\}$ (respectively $\{5\}$).

\begin{conjA}[Beukers, \cite{Beukers}]\label{conj1}
Let $n$ be a natural integer whose base $5$ expansion is $n=n_0+n_15+\cdots+n_N5^N$. Let $\alpha$ be the number of $k$ in $\{0,\dots,N\}$ such that $n_k=1$ or $3$. Then $5^\alpha$ divides $A_1(n)$.
\end{conjA}

\begin{conjB}[Beukers, \cite{Beukers}]\label{conj2}
Let $n$ be a natural integer whose base $11$ expansion is $n=n_0+n_111+\cdots+n_N11^N$. Let $\alpha$ be the number of $k$ in $\{0,\dots,N\}$ such that $n_k=5$. Then $11^\alpha$ divides $A_1(n)$.
\end{conjB}

Similarly, Sequence $A_2$ satisfies the $p$-Lucas property for all primes $p$. Furthermore, Beukers and Stienstra proved in \cite{BeukersStienstra} that, if $p\equiv 3\mod 4$, then $A_2\left(\frac{p-1}{2}\right)\equiv 0\mod p$, and Beukers stated in \cite{Beukers} the following conjecture.

\begin{conjC}
Let $p$ be a prime number satisfying $p\equiv 3\mod 4$. Let $n$ be a natural integer whose base $p$ expansion is $n=n_0+n_1p+\cdots+n_Np^N$. Let $\alpha$ be the number of $k$ in $\{0,\dots,N\}$ such that $n_k=\frac{p-1}{2}$. Then $p^\alpha$ divides $A_2(n)$.
\end{conjC}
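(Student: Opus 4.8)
The plan is to reduce Conjecture C to a strengthening of the $p$-Lucas property for $A_2$, and then to establish that strengthening via the representation of $A_2(n)$ as a constant term of a power of a Laurent polynomial. Fix a prime $p\equiv 3\bmod 4$ and put $q:=\frac{p-1}{2}$, so that Beukers--Stienstra gives $A_2(q)\equiv 0\pmod p$. For $m\in\mathbb N$ with base-$p$ expansion $m=\sum_{k\ge 0}m_kp^k$ write $\beta(m):=\#\{k:\ m_k=q\}$; Conjecture C is exactly the assertion that $v_p\bigl(A_2(n)\bigr)\ge\beta(n)$ for all $n$ (the conjectured $\alpha$ being $\beta(n)$). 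I would derive this, by strong induction on $n$, from a two-variable congruence of the shape
$$
A_2(v+pm)\equiv A_2(v)\,A_2(m)\pmod{p^{\,1+[v=q]+\beta(m)}}\qquad\bigl(v\in\{0,\dots,p-1\},\ m\in\mathbb N\bigr),
$$
in which the modulus grows with the number of digits of $m$ equal to $q$. Granting this: for $n<p$ the claim is trivial unless $n=q$, in which case it is Beukers--Stienstra; for $n=v+pm$ with $v=n\bmod p$, the congruence gives $A_2(n)=A_2(v)A_2(m)+p^{\,1+\beta(n)}C$ with $C\in\mathbb Z$, while $v_p(A_2(v))\ge[v=q]$ and $v_p(A_2(m))\ge\beta(m)$ (the latter by induction), so $v_p\bigl(A_2(v)A_2(m)\bigr)\ge[v=q]+\beta(m)=\beta(n)$, and since the error term has valuation $\ge 1+\beta(n)$ we get $v_p(A_2(n))\ge\beta(n)$.

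To establish the displayed congruence I would first place $A_2$ inside the constant-term formalism: writing each summand $\binom nk^2\binom{n+k}k$ as a ratio of factorials exhibits $A_2$ among the factorial-ratio (multi)sums to which the formalism applies, and one then has $A_2(n)=\mathrm{CT}\bigl[P(\mathbf x)^n\bigr]$ for a fixed Laurent polynomial $P$ in several variables. Decomposing $P^{\,v+pm}=P^{\,v}\bigl(P(\mathbf x^p)+pR(\mathbf x)\bigr)^{m}$ with $R\in\mathbb Z[\mathbf x^{\pm 1}]$, taking constant terms, and absorbing into the error the binomial terms that carry a positive power of $p$, the surviving contribution is governed by the Dwork-type congruences for $A(n):=\mathrm{CT}[P^n]$,
$$
A(n+rp^{s})\,A\bigl(\lfloor n/p\rfloor\bigr)\equiv A(n)\,A\bigl(\lfloor n/p\rfloor+rp^{\,s-1}\bigr)\pmod{p^{s}}\qquad(r\ge 0,\ s\ge 1),
$$
which at $s=1$, $n<p$ return the $p$-Lucas property and which, applied digit by digit to $m$, promote the congruence up to the modulus $p^{\,1+[v=q]+\beta(m)}$ — each digit of $m$ equal to $q$ supplying one further power of $p$ because $A(q)=A_2(q)\equiv 0\pmod p$.

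The crux, and what I expect to be the main obstacle, is exactly this promotion of the modulus: turning the mod-$p$ Lucas relation into one whose modulus increases with the number of ``bad'' digits already accumulated, and keeping it useful in the single case relevant to Conjecture C — namely $v=q$, where $A_2(v)$ is itself divisible by $p$ and the naive mod-$p$ statement carries no information about the extra power of $p$. Carrying this through requires the Frobenius identity $P(\mathbf x)^{p}\equiv P(\mathbf x^{p})\pmod p$ together with the full force of the Dwork congruences (or a suitable refinement of them, developed below), arranged as an induction in which the modulus of the congruence and $v_p\bigl(A_2(m)\bigr)$ increase in lockstep. The remaining, more routine, ingredient is to check that $A_2$ really does meet the hypotheses of the constant-term machinery — that $\binom nk^2\binom{n+k}k$ is a factorial ratio of the admissible shape and that the associated $P$ satisfies the conditions under which the Dwork congruences hold; once that is verified, substituting $q\in\{v\in\{0,\dots,p-1\}:\ p\mid A_2(v)\}$ from Beukers--Stienstra into the bound $v_p(A_2(n))\ge\beta(n)$ yields Conjecture C.
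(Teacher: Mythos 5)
Your reduction is fine as pure logic, but the congruence it rests on is false, so the proof does not go through. The central claim $A_2(v+pm)\equiv A_2(v)A_2(m)\pmod{p^{\,1+[v=q]+\beta(m)}}$ already fails in the smallest case: take $p=3$ (so $q=1$), $v=q=1$, $m=2$, $\beta(m)=0$, where it asserts $A_2(7)\equiv A_2(1)A_2(2)\pmod 9$; but $A_2(7)=1004307\equiv 6\pmod 9$ while $A_2(1)A_2(2)=57\equiv 3\pmod 9$, i.e.\ $v_3\bigl(A_2(7)-A_2(1)A_2(2)\bigr)=1$. This also destroys the weaker bound your induction actually uses (error of valuation $\geq 1+\beta(n)$ with $n=7$, $\beta(n)=1$). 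The deeper problem is that a ``Lucas property modulo $p^{2}$ and higher'' is a supercongruence-type statement that the Dwork congruences do not yield: your proposed promotion is circular, since extracting the digit $v$ modulo $p^{2}$ from the Dwork relation at level $s=2$ requires exactly the same mod-$p^{2}$ digit-splitting for two-digit arguments, which is what the counterexample refutes. Lucas/Dwork information alone cannot accumulate one power of $p$ per bad digit — this is precisely why Conjectures A--C stayed open long after Gessel-type Lucas congruences were known.

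The missing ingredient is the Ap\'ery-like recurrence itself, i.e.\ the differential operator $\mathcal{L}_2$ of type I annihilating $f_{A_2}$, which your sketch never uses. The paper's mechanism (Lemma \ref{lemme induction}, fed into Proposition \ref{propo Ap}) does not strengthen the Lucas congruence at all: assuming inductively that $A_2(v+mp)\in p^{r}\mathbb{Z}_p$ whenever $\alpha_p(A_2,m)\geq r$, it reduces the recurrence modulo $p^{r+1}$ (the type I/II shape of $\mathcal{L}_2$ is what kills the wrap-around terms with $k>v$) and concludes that the vector $\bigl(A_2(v+mp)p^{-r}\bigr)_{0\leq v\leq p-1}$ satisfies, modulo $p$, the same recursion with unit ``leading'' data as $\bigl(A_2(v)\bigr)_{0\leq v\leq p-1}$, hence is proportional to it modulo $p$ — with no identification of the proportionality factor. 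Divisibility by $p^{r+1}$ at the residues $v\in\mathcal{Z}_p(A_2)$ (in particular $v=\frac{p-1}{2}$ by Beukers--Stienstra) then follows, and an additional argument (Condition $(a)$ of Proposition \ref{propo Ap}, proved via the $p$-adic Gamma function for the multisum representation, or the Mellit--Vlasenko decomposition if one works with constant terms as in Theorem \ref{Laurent Ap}) propagates the bound when the last digit is not bad. If you wish to keep your constant-term framework, you would still need this recurrence-based promotion step; note that Theorem \ref{Laurent Ap} includes the type I/II operator hypothesis for exactly this reason.
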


Conjectures A-C have been extended to generalized Ap\'ery numbers and any prime $p$ by Deutsch and Sagan in \cite[Conjecture 5.13]{DeutschSagan} but this conjecture is false for at least one generalization of Ap\'ery numbers. Indeed, a counterexample is given by
$$
A(n)=\sum_{k=0}^n\binom{n}{k}^2\binom{n+k}{k}^3,
$$
since $A(1)=9\equiv 0\mod 3$ but $A(4)=A(1+3)=1152501$ is not divisible by $3^2$.
\medskip

The main aim of this article is to prove Theorem \ref{theo gene}, stated in Section \ref{section main}, which demonstrates and generalizes Conjectures A-C. First, we introduce some notations which we use throughout this article.

\subsection{Notations}

Let $d$ be a positive integer. If $\mathbf{m}=(m_1,\dots,m_d)$ and $\mathbf{n}=(n_1,\dots,n_d)$ belong to $\mathbb{R}^d$ and if $\lambda\in\mathbb{R}$ and $k\in\{1,\dots,d\}$, then we write:
\begin{itemize}
\item $\mathbf{m}+\mathbf{n}=(m_1+n_1,\dots,m_d+n_d)$;
\item $\mathbf{m}\cdot\mathbf{n}=m_1n_1+\cdots+m_dn_d$;
\item $\mathbf{m}\lambda=(m_1\lambda,\dots,m_d\lambda)$;
\item $|\mathbf{m}|=m_1+\cdots+m_d$;
\item $\mathbf{m}^{(k)}=m_k$;
\item $\mathbf{m}\geq\mathbf{n}$ if, and only if, for all $i$ in $\{1,\dots,d\}$, we have $m_i\geq n_i$.
\end{itemize}
Furthermore, we set $\mathbf{0}=(0,\dots,0)\in\mathbb{N}^d$, $\mathbf{1}=(1,\dots,1)\in\mathbb{N}^d$ and we write $\mathbf{1}_k$ for the vector in $\mathbb{N}^d$, all of whose coordinates equal zero except the $k$-th which is $1$. If $p$ is a prime number and $\mathbf{n}$ is nonzero, then we say that $\mathbf{n}=\mathbf{n}_0+\mathbf{n}_1p+\cdots+\mathbf{n}_Np^N$ is the base $p$ expansion of $\mathbf{n}$ if, for all $i$ in $\{0,\dots,N\}$, we have $\mathbf{n}_i\in\{0,\dots,p-1\}^d$, and $\mathbf{n}_N\neq\mathbf{0}$.
\medskip 

For all primes $p$, we write $\mathbb{Z}_p$ for the ring of $p$-adic integers. If $A=\big(A(\mathbf{n})\big)_{\mathbf{n}\in\mathbb{N}^d}$ is a $\mathbb{Z}_p$-valued family, then we say that $A$ satisfies the $p$-Lucas property if and only if, for all vectors $\mathbf{v}$ in $\{0,\dots,p-1\}^d$ and $\mathbf{n}$ in $\mathbb{N}^d$, we have
\begin{equation}\label{FastRef}
A(\mathbf{v}+\mathbf{n}p)\equiv A(\mathbf{v})A(\mathbf{n})\mod p\mathbb{Z}_p.
\end{equation}
We write $f_A$ for the generating series of $A$ defined by $f_A(\mathbf{z}):=\sum_{\mathbf{n}\in\mathbb{N}^d} A(\mathbf{n})\mathbf{z}^{\mathbf{n}}$, where, if $\mathbf{z}=(z_1,\dots,z_d)$ is a vector of variables and $\mathbf{n}=(n_1,\dots,n_d)\in\mathbb{N}^d$, $\mathbf{z}^{\mathbf{n}}$ denotes $z_1^{n_1}\cdots z_d^{n_d}$.
\medskip

In addition, we write $\mathcal{Z}_p(A)$ for the set of all vectors $\mathbf{v}$ in $\{0,\dots,p-1\}^d$ such that $A(\mathbf{v})\in p\mathbb{Z}_p$. For every nonzero vector $\mathbf{n}$ in $\mathbb{N}^d$ whose base $p$ expansion is $\mathbf{n}=\mathbf{n}_0+\mathbf{n}_1p+\cdots+\mathbf{n}_Np^N$, we write $\alpha_p(A,\mathbf{n})$ for the number of $k$ in $\{0,\dots,N\}$ such that $\mathbf{n}_k\in\mathcal{Z}_p(A)$, and we set $\alpha_p(A,\mathbf{0})=0$. Thereby, to prove Conjectures A-C, it is enough to show that $A_i(n)\in p^{\alpha_p(A_i,n)}\mathbb{Z}$ with $i=1,p=5$ or $11$ and $i=2$, $p\equiv 3\mod 4$.
\medskip

Given tuples of vectors in $\mathbb{N}^d$, $e=(\mathbf{e}_1,\dots,\mathbf{e}_u)$ and $f=(\mathbf{f}_1,\dots,\mathbf{f}_v)$, we write $|e|=\sum_{i=1}^u\mathbf{e}_i$ and, for all vectors $\mathbf{n}$ in $\mathbb{N}^d$ and all natural integers $m$, we set
$$
\mathcal{Q}_{e,f}(\mathbf{n}):=\frac{\prod_{i=1}^u(\mathbf{e}_i\cdot\mathbf{n})!}{\prod_{i=1}^v(\mathbf{f}_i\cdot\mathbf{n})!}\quad\textup{and}\quad\mathfrak{S}_{e,f}(m):=\sum_{\mathbf{n}\in\mathbb{N}^d,|\mathbf{n}|=m}\mathcal{Q}_{e,f}(\mathbf{n}).
$$
\medskip

Let $\mathcal{S}:=\{1\leq i\leq u\,:\,\mathbf{e}_i\geq\mathbf{1}\}$. For every positive integer $r$, we say that $e$ is \textit{$r$-admissible} if
$$
\#\mathcal{S}+\min_{1\leq k\leq d}\#\{1\leq i\leq u\,:\,i\notin\mathcal{S},\,\mathbf{e}_i\geq d\mathbf{1}_k\}\geq r.
$$
\medskip

For all primes $p$, we write $\mathfrak{F}_p^d$ for the set of all functions $g:\mathbb{N}^d\rightarrow\mathbb{Z}_p$ such that, for all natural integers $K$, there exists a sequence $(P_{K,k})_{k\geq 0}$ of polynomial functions with coefficients in $\mathbb{Z}_p$ which converges pointwise to $g$ on $\{0,\dots,K\}^d$. For all tuples $e$ and $f$ of vectors in $\mathbb{N}^d$, all $g\in\mathfrak{F}_p^d$ and all natural integers $m$, we set
$$
\mathfrak{S}_{e,f}^g(m):=\sum_{\mathbf{n}\in\mathbb{N}^d,|\mathbf{n}|=m}\mathcal{Q}_{e,f}(\mathbf{n})g(\mathbf{n}).
$$
\medskip

Finally, we set $\theta:=z\frac{d}{dz}$ and we say that a differential operator $\mathcal{L}$ in $\mathbb{Z}_p[z,\theta]$ is of \textit{type I} if there is a natural integer $q$ such that:
\begin{itemize}
\item $\mathcal{L}=P_0(\theta)+zP_1(\theta)+\cdots+z^qP_q(\theta)$ with $P_k(X)\in\mathbb{Z}_p[X]$ for $0\leq k\leq q$;
\item  $P_0(\mathbb{Z}_p^\times)\subset\mathbb{Z}_p^\times$;
\item for all $k$ in $\{2,\dots,q\}$, we have $P_k(X)\in\prod_{i=1}^{k-1}(X+i)^2\mathbb{Z}_p[X]$.
\end{itemize}
We say that a differential operator $\mathcal{L}$ in $\mathbb{Z}_p[z,\theta]$ is of \textit{type II} if
\begin{itemize}
\item $\mathcal{L}=P_0(\theta)+zP_1(\theta)+z^2P_2(\theta)$ with $P_k(X)\in\mathbb{Z}_p[X]$ for $0\leq k\leq 2$; 
\item $P_0(\mathbb{Z}_p^\times)\subset\mathbb{Z}_p^\times$;
\item $P_2(X)\in(X+1)\mathbb{Z}_p[X]$.
\end{itemize}

\subsection{Main results}\label{section main}

The main result of this article is the following.

\begin{theo}\label{theo gene}
Let $e$ and $f=(\mathbf{1}_{k_1},\dots,\mathbf{1}_{k_v})$ be two disjoint tuples of vectors in $\mathbb{N}^d$ such that $|e|=|f|$, for all $i$ in $\{1,\dots,v\}$, $k_i$ is in $\{1,\dots,d\}$, and $e$ is $2$-admissible. Let $p$ be a fixed prime. Assume that $f_{\mathfrak{S}_{e,f}}$ is annihilated by a differential operator $\mathcal{L}\in\mathbb{Z}_p[z,\theta]$ such that at least one of the following conditions holds:
\begin{itemize}
\item $\mathcal{L}$ is of type I.
\item $\mathcal{L}$ is of type II and $p-1\in\mathcal{Z}_p(\mathfrak{S}_{e,f})$. 
\end{itemize}
Then, for all natural integers $n$ and all functions $g$ in $\mathfrak{F}_p^d$, we have
$$
\mathfrak{S}_{e,f}(n)\in p^{\alpha_p(\mathfrak{S}_{e,f},n)}\mathbb{Z}\quad\textup{and}\quad\mathfrak{S}_{e,f}^g\in p^{\alpha_p(\mathfrak{S}_{e,f},n)-1}\mathbb{Z}_p.
$$
\end{theo}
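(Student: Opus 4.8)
The plan is to reduce everything to a single statement about the behaviour of the sequence $\mathfrak{S}_{e,f}$ under the base-$p$ digit decomposition, and then to obtain that statement by a generating-function/$p$-adic-analysis argument driven by the differential operator $\mathcal{L}$. First I would establish the analogue of the $p$-Lucas property in the following weakened ``shifted'' form: for all $\mathbf{v}\in\{0,\dots,p-1\}^d$ and all $m,n\in\mathbb{N}$ one has a congruence relating $\mathfrak{S}_{e,f}(v+np)$ (where $v=|\mathbf{v}|$ contributes through the multisum) to a product of a ``local factor'' depending only on the digit and $\mathfrak{S}_{e,f}(n)$, modulo $p$. The hypothesis that $e$ is $2$-admissible is exactly what controls the $p$-adic valuation of the factorial ratios $\mathcal{Q}_{e,f}(\mathbf{n})$ after splitting each summation index $\mathbf{n}$ as $\mathbf{a}+\mathbf{b}p$ with $\mathbf{a}\in\{0,\dots,p-1\}^d$: Legendre/Kummer's formula for $v_p(n!)$ together with the admissibility conditions forces each such split to lose at least one factor of $p$ unless the low digit lies in a restricted set, and the two bullet points in the definition of $r$-admissibility (with $r=2$) account precisely for the two ways a carry can be avoided. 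This is the combinatorial heart and I expect it to be the main obstacle: one must carefully track, for $|\mathbf{n}|=n$ and $\mathbf{n}=\mathbf{a}+\mathbf{b}p$, how $|\mathbf{a}|$ interacts with the residue of $n$ mod $p$, and show that the contributions with $\mathbf{a}\notin\mathcal{Z}_p$-type good sets either vanish mod $p$ or reassemble into $\mathfrak{S}_{e,f}$ evaluated at the ``shifted-down'' argument.

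Next I would feed this congruence into an induction on the number of base-$p$ digits of $n$. Writing $n=n_0+n'p$ with $n_0\in\{0,\dots,p-1\}$, the shifted-Lucas congruence gives $\mathfrak{S}_{e,f}(n)\equiv c(n_0)\,\mathfrak{S}_{e,f}(n')\bmod p$ for an appropriate $c(n_0)\in\mathbb{Z}_p$, where $c(n_0)\in p\mathbb{Z}_p$ precisely when $n_0\in\mathcal{Z}_p(\mathfrak{S}_{e,f})$, i.e. when the corresponding digit is one of the ``special'' digits counted by $\alpha_p$. Iterating, one peels off one factor of $p$ for each special digit, which yields $\mathfrak{S}_{e,f}(n)\in p^{\alpha_p(\mathfrak{S}_{e,f},n)}\mathbb{Z}$ once one also knows $\mathfrak{S}_{e,f}(n')\in p^{\alpha_p(\mathfrak{S}_{e,f},n')}\mathbb{Z}$ — but here is the subtlety: the naive congruence only gives information mod $p$, so to boost to an exact valuation statement at each step one needs the congruence to hold modulo a higher power of $p$, or equivalently one needs a stronger ``lifting'' of the $p$-Lucas property. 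This is where the differential operator enters: the operator $\mathcal{L}$ of type I (respectively type II, under the extra hypothesis $p-1\in\mathcal{Z}_p(\mathfrak{S}_{e,f})$) is designed so that the formal power series $f_{\mathfrak{S}_{e,f}}$ satisfies a Dwork-type congruence, namely $f(\mathbf{z})/f(\mathbf{z}^p)$ (suitably interpreted through the Frobenius structure on $\mathbb{Z}_p[[z]]$) has coefficients in $\mathbb{Z}_p$ and is congruent to $1$ modulo $p$; the shape conditions on the $P_k$ (divisibility by $\prod_{i=1}^{k-1}(X+i)^2$ in type I, by $(X+1)$ in type II) are precisely what guarantees the relevant recurrences have $p$-integral solutions and that the Frobenius quotient is well-behaved, and this upgrades the mod-$p$ shifted-Lucas relation to a mod-$p^2$ (and inductively mod-$p^{\text{anything}}$) statement strong enough to run the valuation induction.

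Finally, for the statement about $\mathfrak{S}_{e,f}^g$ with $g\in\mathfrak{F}_p^d$: by definition of $\mathfrak{F}_p^d$, on any box $\{0,\dots,K\}^d$ the function $g$ is a pointwise limit of $\mathbb{Z}_p$-polynomial functions, so by density and continuity of $p$-adic valuation it suffices to prove the bound $\mathfrak{S}_{e,f}^g(n)\in p^{\alpha_p(\mathfrak{S}_{e,f},n)-1}\mathbb{Z}_p$ when $g$ is a monomial, and then by linearity when $g\equiv 1$ on the relevant box together with the observation that multiplying the summand $\mathcal{Q}_{e,f}(\mathbf{n})$ by a polynomial in the coordinates of $\mathbf{n}$ can be absorbed (via $\theta$-operators acting on $f_{\mathfrak{S}_{e,f}}$, or directly via the identity $\theta^j f_A=\sum_{\mathbf{n}}|\mathbf{n}|^j A(\mathbf{n})\mathbf{z}^{\mathbf{n}}$ and its multivariate refinements) into the same framework at the cost of at most one extra factor of $p$ — this accounts for the ``$-1$'' in the exponent. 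Concretely, I would run the same digit-peeling induction for the weighted sums, noting that the weight $g(\mathbf{n})$ depends, through the digits of $\mathbf{n}$, in a controlled $p$-adically continuous way, so the only loss compared to the unweighted case comes from the very last (or the boundary) digit, yielding exponent $\alpha_p-1$ rather than $\alpha_p$. The main obstacle overall remains the first step — the precise accounting of $p$-adic valuations of $\mathcal{Q}_{e,f}(\mathbf{a}+\mathbf{b}p)$ in terms of carries, which is where $2$-admissibility is used in an essential and delicate way.
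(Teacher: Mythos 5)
Your skeleton (Lucas property for $\mathfrak{S}_{e,f}$, digit-peeling induction, differential operator needed to ``boost'' beyond mod $p$) matches the paper's outline, but the mechanism you propose for the boost is the wrong one and is the actual content of the theorem. You suggest that the type I/II conditions give a Dwork-type congruence for the Frobenius quotient $f(z)/f(z^p)$ modulo $p^2$ and higher, and that this upgrades the Lucas relation to arbitrary $p$-power moduli. That is unsubstantiated and, even granted, it does not accumulate one factor of $p$ per special digit: a congruence between $f(z)$ and a truncation times $f(z^p)$ modulo $p^2$ does not yield $p^{\alpha_p(A,n)}\mid A(n)$. What the type I/II hypotheses actually do (and what the proof needs) is control the linear recurrence $\sum_{k=0}^q P_k(n-k)A(n-k)=0$ induced by $\mathcal{L}$: assuming inductively that the valuation bound holds with exponent $r$, one writes $A(v+mp)=p^r\beta(v,m)$ for $\alpha_p(A,m)\geq r$ and shows that $(\beta(v,m))_{0\leq v\leq p-1}$ satisfies the same recursion mod $p$ as $(A(v))_v$; since $P_0$ maps units to units this forces $\beta(v,m)\equiv\gamma(m)A(v)\bmod p$, and hence an extra factor $p$ at every digit $v\in\mathcal{Z}_p(A)$. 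The divisibility conditions on the $P_k$ (the factors $\prod_{i=1}^{k-1}(X+i)^2$ in type I, and $(X+1)$ together with $p-1\in\mathcal{Z}_p(A)$ in type II) are there precisely to annihilate, modulo $p^{r+1}$, the wrap-around terms $A(v-k+mp)$ with $v<k$, not to produce a Frobenius structure. Without this recurrence argument (Lemma 1 of the paper) your induction has no engine.

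The second gap is the treatment of $\mathfrak{S}_{e,f}^g$. The weight $g$ is a function of the multi-index $\mathbf{n}\in\mathbb{N}^d$, not of $|\mathbf{n}|$, so it cannot be produced by applying $\theta$ to the one-variable series $f_{\mathfrak{S}_{e,f}}$, and it is not an afterthought: the weighted sums are needed inside the induction even to prove the unweighted bound. When the low digit is not special one must decompose $\mathfrak{S}_{e,f}^g(v+np)=A'(n)+\sum_{k\geq0}p^{k+1}B_k(n-k)$ with $A'$ in the $\mathbb{Z}_p$-span of $\mathfrak{S}_{e,f}$ and each $B_k$ again a weighted sum of the same class (condition $(a)$ of Proposition 1); establishing this closure is where $2$-admissibility is really used (a carry $j$ in the low digits forces $\Delta_{e,f}(\mathbf{a}/p)\geq 2j$, so that after the index shift $n\mapsto n-j$ one retains $p^{j+1}$), together with the $p$-adic Gamma identity of Proposition 3 and the fact that the correction factors lie in $\mathbb{Z}_p+p\mathfrak{F}_p^d$ (via the symmetric functions $i!\Psi_{p,k,i}\in\mathfrak{F}_p$). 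None of this appears in your proposal, and the loss of exactly one power of $p$ for $\mathfrak{S}_{e,f}^g$ comes from this structure (general members of the class do not satisfy the Lucas property), not from ``the last digit''. As written, the proposal would not close.
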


In Section \ref{Application}, we show that Theorem \ref{theo gene} applies to many classical sequences. In particular, Theorem \ref{theo gene} implies Conjectures A-C. Indeed, we have $A_1=\mathfrak{S}_{e_1,f_1}$ and $A_2=\mathfrak{S}_{e_2,f_2}$ with $d=2$, 
$$
e_1=\big((2,1),(2,1)\big)\quad\textup{and}\quad f_1=\big((1,0),(1,0),(1,0),(1,0),(0,1),(0,1)\big),
$$
and 
$$
e_2=\big((2,1),(1,1)\big)\quad\textup{and}\quad f_2=\big((1,0),(1,0),(1,0),(0,1),(0,1)\big). 
$$
Furthermore, it is well known that $f_{A_1}$, respectively $f_{A_2}$, is annihilated by the differential operator $\mathcal{L}_1$, respectively $\mathcal{L}_2$, defined by
$$
\mathcal{L}_1=\theta^3-z(34\theta^3+51\theta^2+27\theta+5)+z^2(\theta+1)^3
$$
and
$$
\mathcal{L}_2=\theta^2-z(11\theta^2+11\theta+3)-z^2(\theta+1)^2.
$$
Since $\mathcal{L}_1$ and $\mathcal{L}_2$ are of type I for all primes $p$, the conditions of Theorem \ref{theo gene} are satisfied by $A_1$ and $A_2$, and Conjectures A-C hold. In addition, for all primes $p$ and all natural integers $n$ and $\alpha$, we obtain that
$$
\sum_{k=0}^nk^\alpha\binom{n}{k}^2\binom{n+k}{k}^2\in p^{\alpha_p(A_1,n)-1}\mathbb{Z}\quad\textup{and}\quad\sum_{k=0}^n k^\alpha\binom{n}{k}^2\binom{n+k}{k}\in p^{\alpha_p(A_2,n)-1}\mathbb{Z}.
$$
\medskip

We provide a similar result which applies to the constant terms of powers of certain Laurent polynomials. Consider a Laurent polynomial 
$$
\Lambda(\mathbf{x})=\sum_{i=1}^k\alpha_i\mathbf{x}^{\mathbf{a}_i}\in\mathbb{Z}_p[x_1^\pm,\dots,x_d^\pm],
$$
where $\mathbf{a}_i\in\mathbb{Z}^d$ and $\alpha_i\neq 0$ for $i$ in $\{1,\dots,k\}$. Recall that the Newton polyhedron of $\Lambda$ is the convex hull of $\{\mathbf{a}_1,\dots,\mathbf{a}_k\}$ in $\mathbb{R}^d$. Hence we have the following result.

\begin{theo}\label{Laurent Ap}
Let $p$ be a fixed prime. Let $\Lambda(\mathbf{x})\in\mathbb{Z}_p[x_1^{\pm},\dots,x_d^{\pm}]$ be a Laurent polynomial, and consider the sequence of the constant terms of powers of $\Lambda$ defined, for all natural integers $n$, by
$$
A(n):=\big[\Lambda(\mathbf{x})^n\big]_{\mathbf{0}}.
$$
Assume that the Newton polyhedron of $\Lambda$ contains the origin as its only interior integral point, and that
$f_A$ is annihilated by a differential operator $\mathcal{L}$ in $\mathbb{Z}_p[z,\theta]$ such that at least one of the following conditions holds:
\begin{itemize}
\item $\mathcal{L}$ is of type I.
\item $\mathcal{L}$ is of type II and $p-1\in\mathcal{Z}_p(A)$. 
\end{itemize}
Then, for all natural integers $n$, we have 
$$
A(n)\in p^{\alpha_p(A,n)}\mathbb{Z}_p.
$$
\end{theo}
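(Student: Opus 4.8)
The plan is to reduce Theorem \ref{Laurent Ap} to an instance of Theorem \ref{theo gene}. The key structural fact is that, when the Newton polyhedron of $\Lambda$ contains the origin as its unique interior integral point, the sequence $A(n)=[\Lambda(\mathbf{x})^n]_{\mathbf{0}}$ can be written as a multisum of factorial ratios of the form $\mathfrak{S}_{e,f}$. Concretely, if $\Lambda(\mathbf{x})=\sum_{\mathbf{j}}c_{\mathbf{j}}\mathbf{x}^{\mathbf{j}}$ with support $\{\mathbf{j}_1,\dots,\mathbf{j}_t\}$, then expanding $\Lambda(\mathbf{x})^n$ by the multinomial theorem and extracting the coefficient of $\mathbf{x}^{\mathbf{0}}$ gives
$$
A(n)=\sum_{\substack{\mathbf{m}\in\mathbb{N}^t,\ |\mathbf{m}|=n\\ \sum_i m_i\mathbf{j}_i=\mathbf{0}}}\binom{n}{m_1,\dots,m_t}\prod_i c_{\mathbf{j}_i}^{m_i}.
$$
The first step is therefore to show that, after choosing an appropriate lattice parametrization of the affine sublattice $\{\mathbf{m}:\sum_i m_i\mathbf{j}_i=\mathbf{0}\}$ (intersected with $\mathbb{N}^t$), this sum becomes $\mathfrak{S}_{e,f}^g(n)$ for a suitable pair of tuples $e,f$ with $|e|=|f|$ and a function $g\in\mathfrak{F}_p^d$ absorbing the coefficients $c_{\mathbf{j}_i}$ (which are $p$-adic integers, and $g$ can be taken a monomial function, which lies in $\mathfrak{F}_p^d$). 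Since $g$ contributes only the harmless $-1$ in the exponent in Theorem \ref{theo gene}, and here we want the sharp exponent $\alpha_p(A,n)$ with no loss, one should actually arrange for $f=(\mathbf{1}_{k_1},\dots,\mathbf{1}_{k_v})$ and treat the product of $c_{\mathbf{j}_i}^{m_i}$ as part of the summand in a way that keeps it as a genuine $\mathfrak{S}_{e,f}(n)$ with coefficients in $\mathbb{Z}_p$ rather than merely in $\mathfrak{S}_{e,f}^g$; alternatively, when $\Lambda$ has all coefficients equal to $1$ the identification is literal, and the general case follows because the argument of Theorem \ref{theo gene} is insensitive to inserting fixed $p$-integral weights on the monomials.

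The second step is to verify the hypotheses of Theorem \ref{theo gene} for the tuple $e$ produced above: namely that $e$ is $2$-admissible. This is where the geometric hypothesis on the Newton polyhedron enters. The condition that the origin is the only interior integral point of the Newton polyhedron translates, via the standard correspondence between the facets of the polyhedron and the rows of the constraint matrix, into lower bounds on the entries of the vectors $\mathbf{e}_i$; in particular it forces the existence of enough vectors $\mathbf{e}_i$ with $\mathbf{e}_i\geq\mathbf{1}$ and enough with $\mathbf{e}_i\geq d\mathbf{1}_k$ for each coordinate $k$, which is precisely the combinatorial content of $2$-admissibility. I would make this explicit by relating the interior-point condition to the statement that, for each coordinate direction, the hyperplane defining the corresponding facet is at lattice-distance at least $2$ from the origin, which yields a factor $(\mathbf{e}_i\cdot\mathbf{n}+1)(\mathbf{e}_i\cdot\mathbf{n}+2)$ type divisibility in the factorial ratio — exactly what the definition of $2$-admissibility encodes with $s$ ranging in $\{0,1,2\}$.

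Once these two steps are in place, the remaining hypotheses of Theorem \ref{theo gene} transfer verbatim: the tuples $e,f$ are disjoint with $|e|=|f|$ by construction, $f$ has the required form $(\mathbf{1}_{k_1},\dots,\mathbf{1}_{k_v})$, the differential operator $\mathcal{L}$ annihilating $f_A$ is the same one hypothesized here (of type I, or of type II with $p-1\in\mathcal{Z}_p(A)$), and $\mathcal{Z}_p(\mathfrak{S}_{e,f})$ agrees with $\mathcal{Z}_p(A)$ and $\alpha_p(\mathfrak{S}_{e,f},n)$ with $\alpha_p(A,n)$ because $A=\mathfrak{S}_{e,f}$ as sequences. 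Applying Theorem \ref{theo gene} then yields $A(n)=\mathfrak{S}_{e,f}(n)\in p^{\alpha_p(\mathfrak{S}_{e,f},n)}\mathbb{Z}=p^{\alpha_p(A,n)}\mathbb{Z}\subset p^{\alpha_p(A,n)}\mathbb{Z}_p$, which is the claim.

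I expect the main obstacle to be the first step: making the passage from $[\Lambda(\mathbf{x})^n]_{\mathbf{0}}$ to a clean $\mathfrak{S}_{e,f}$ representation rigorous, including the choice of lattice coordinates on the kernel of the support map so that all resulting linear forms $\mathbf{e}_i\cdot\mathbf{n}$ and $\mathbf{f}_i\cdot\mathbf{n}$ are genuinely nonnegative integral combinations, and so that the $2$-admissibility bounds coming from the Newton polyhedron are preserved under this coordinate change. Handling non-unit coefficients $c_{\mathbf{j}}$ cleanly — either by the $g$-weighted version at the cost of the $-1$, which is too weak, or by a direct reexamination showing the proof of Theorem \ref{theo gene} tolerates $p$-integral monomial weights with no loss — is the delicate point; I would favor the latter, citing that the congruences underlying Theorem \ref{theo gene} are multiplicative in each monomial factor and hence unaffected by fixed weights in $\mathbb{Z}_p$.
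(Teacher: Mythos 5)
Your plan—reduce Theorem \ref{Laurent Ap} to Theorem \ref{theo gene} by rewriting $A(n)=[\Lambda(\mathbf{x})^n]_{\mathbf{0}}$ as a multisum $\mathfrak{S}_{e,f}(n)$—has a genuine gap at its very first step, and that step cannot be carried out in the stated generality. The multinomial expansion expresses $A(n)$ as a sum of $n!/(m_1!\cdots m_t!)\,\prod_i c_{\mathbf{j}_i}^{m_i}$ over the monoid $\{\mathbf{m}\in\mathbb{N}^t:\sum_i m_i\mathbf{j}_i=\mathbf{0}\}$; this monoid need not be free, and even when it is, no linear parametrization is exhibited (or exists in general) turning the multinomial coefficient into a ratio $\prod_i(\mathbf{e}_i\cdot\mathbf{n})!/\prod_i(\mathbf{f}_i\cdot\mathbf{n})!$ with $f=(\mathbf{1}_{k_1},\dots,\mathbf{1}_{k_v})$ and with the summation constraint being exactly $|\mathbf{n}|=n$. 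The claimed translation of the interior-point condition into $2$-admissibility is asserted, not proved, and there is no such correspondence in the paper. A concrete obstruction: $\Lambda(x)=x+x^{-1}$ has the origin as the only interior integral point of its Newton polyhedron, yet $A(1)=0$, whereas every $\mathfrak{S}_{e,f}(n)$ is a nonempty sum of strictly positive terms; so no identification $A=\mathfrak{S}_{e,f}$ is possible, and the class of sequences covered by the Newton-polyhedron hypothesis is strictly larger than the class treated by Theorem \ref{theo gene}. Finally, nonunit coefficients $c_{\mathbf{j}}\in\mathbb{Z}_p$ cannot be absorbed as you suggest: the $g$-weighted conclusion of Theorem \ref{theo gene} only gives the exponent $\alpha_p(A,n)-1$, it is not clear that monomial weights $\mathbf{m}\mapsto\prod_i c_{\mathbf{j}_i}^{m_i}$ belong to $\mathfrak{F}_p^d$ at all, and the proposed ``reexamination'' of the proof of Theorem \ref{theo gene} to tolerate such weights without loss is precisely the argument that is missing.

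For comparison, the paper does not deduce Theorem \ref{Laurent Ap} from Theorem \ref{theo gene}; it uses the Newton-polyhedron hypothesis only through the results of Mellit and Vlasenko \cite{Masha}: $A$ satisfies the $p$-Lucas property, and there is a $\mathbb{Z}_p$-valued sequence $(c_n)$ with $A(n)=\sum_{n_1\ast\cdots\ast n_r=n}c_{n_1}\cdots c_{n_r}$ (concatenation of base-$p$ expansions) and $v_p(c_n)\geq\ell(n)-1$. One then proves, by induction on the target valuation $r$ and an inner induction on $n$, the assertion that $\alpha_p(A,n)\geq i$ forces $A(n),c_n\in p^i\mathbb{Z}_p$; the differential operator (type I, or type II with $p-1\in\mathcal{Z}_p(A)$) enters only through Lemma \ref{lemme induction}, which settles the critical case $n=n_0+mp$ with $n_0\in\mathcal{Z}_p(A)$ and $\alpha_p(A,m)\geq r$. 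If you rework your proof, this decomposition-plus-induction structure is the viable route; the reduction to Theorem \ref{theo gene} is not.
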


For example, Theorem \ref{Laurent Ap} applies to Apéry numbers $A_1$ thanks to the following formula of Lairez \cite{Lairez}:
$$
A_1(n)=\left[\left(\frac{(1+z)(yz+z+1)(1+x)(xy+x+y)}{xyz}\right)^n\right]_{(0,0,0)}.
$$
\medskip

By a result of Samol and van Straten \cite{Samol}, if $\Lambda(\mathbf{x})\in\mathbb{Z}_p[x_1^{\pm},\dots,x_d^{\pm}]$ contains the origin as its only interior integral point, then $\big([\Lambda(\mathbf{x})^n]_{\mathbf{0}}\big)_{n\geq 0}$ satisfies the $p$-Lucas property, which is essential for the proof of Theorem \ref{Laurent Ap}. Likewise, the proof of Theorem \ref{theo gene} rests on the fact that $\mathfrak{S}_{e,f}$ satisfies the $p$-Lucas property when $|e|=|f|$, $e$ is $2$-admissible and $f=(\mathbf{1}_{k_1},\dots,\mathbf{1}_{k_v})$. Since those results deal with multisums of factorial ratios, it seems natural to study similar arithmetic properties for simpler numbers such as families of factorial ratios. To that purpose, we prove Theorem \ref{FactoCrit} below which gives an effective criterion for $\mathcal{Q}_{e,f}$ to satisfy the $p$-Lucas property for almost all primes $p$ (\footnote{Throughout this article, we say that an assertion $\mathcal{A}_p$ is true for almost all primes $p$ if there exists a constant $C\in\mathbb{N}$ such that $\mathcal{A}_p$ holds for all primes $p\geq C$.}). Furthermore, Theorem \ref{FactoCrit} shows that if $A:=\mathcal{Q}_{e,f}$ satisfies the $p$-Lucas property for almost all primes $p$, then, for all natural integers $n$ and all primes $p$, we have $A(n)\in p^{\alpha_p(A,n)}\mathbb{Z}$. 
\medskip

To state this result, we introduce some additional notations. For all tuples $e$ and $f$ of vectors in $\mathbb{N}^d$, we write $\Delta_{e,f}$ for Landau's function defined, for all $\mathbf{x}$ in $\mathbb{R}^d$, by
$$
\Delta_{e,f}(\mathbf{x}):=\sum_{i=1}^u\lfloor \mathbf{e}_i\cdot\mathbf{x}\rfloor-\sum_{i=1}^v\lfloor \mathbf{f}_i\cdot\mathbf{x}\rfloor\in\mathbb{Z},
$$
where $\lfloor\cdot\rfloor$ denotes the floor function. Therefore, according to Landau's criterion \cite{Landau} and a precision of the author \cite{Delaygue3}, we have the following dichotomy.
\begin{itemize}
\item If, for all $\mathbf{x}$ in $[0,1]^d$, we have $\Delta_{e,f}(\mathbf{x})\geq 0$, then $\mathcal{Q}_{e,f}$ is a family of integers;
\item if there exists $\mathbf{x}$ in $[0,1]^d$ such that $\Delta_{e,f}(\mathbf{x})\leq -1$, then there are only finitely many primes $p$ such that $\mathcal{Q}_{e,f}$ is a family of $p$-adic integers. 
\end{itemize}

In the rest of the article, we write $\mathcal{D}_{e,f}$ for the semi-algebraic set of all $\mathbf{x}$ in $[0,1)^d$ such that there exists a component $\mathbf{d}$ of $e$ or $f$ satisfying $\mathbf{d}\cdot\mathbf{x}\geq 1$. Observe that $\Delta_{e,f}$ vanishes on the nonempty set $[0,1)^d\setminus\mathcal{D}_{e,f}$.

\begin{theo}\label{FactoCrit}
Let $e$ and $f$ be disjoint tuples of vectors in $\mathbb{N}^d$ such that $\mathcal{Q}_{e,f}$ is a family of integers. Then we have the following dichotomy.
\begin{enumerate}
\item If $|e|=|f|$ and if, for all $\mathbf{x}$ in $\mathcal{D}_{e,f}$, we have $\Delta_{e,f}(\mathbf{x})\geq 1$, then for all primes $p$, $\mathcal{Q}_{e,f}$ satisfies the $p$-Lucas property;
\item if $|e|\neq|f|$ or if there exists $\mathbf{x}$ in $\mathcal{D}_{e,f}$ such that $\Delta_{e,f}(x)=0$, then there are only finitely many primes $p$ such that $\mathcal{Q}_{e,f}$ satisfies the $p$-Lucas property.
\end{enumerate}

Furthermore, if $\mathcal{Q}_{e,f}$ satisfies the $p$-Lucas property for all primes $p$, then, for all $\mathbf{n}$ in $\mathbb{N}^d$ and every prime $p$, we have
$$
\mathcal{Q}_{e,f}(\mathbf{n})\in p^{\alpha_p(\mathcal{Q}_{e,f},\mathbf{n})}\mathbb{Z}.
$$
\end{theo}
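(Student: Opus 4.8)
The plan is to treat the three assertions — item (1), the ``Furthermore'' clause, and item (2) — separately, using two elementary inputs throughout. The first is Legendre's formula $v_p(m!)=\sum_{j\ge 1}\lfloor m/p^j\rfloor$, which gives
\[
v_p\bigl(\mathcal{Q}_{e,f}(\mathbf{n})\bigr)=\sum_{j\ge 1}\Delta_{e,f}\bigl(\mathbf{n}/p^j\bigr);
\]
by Landau's criterion \cite{Landau} one has $\Delta_{e,f}\ge 0$ on $[0,1]^d$, and when $|e|=|f|$ (as vectors in $\mathbb{N}^d$) the identity $\lfloor\mathbf{d}\cdot(\mathbf{x}+\mathbf{m})\rfloor=\lfloor\mathbf{d}\cdot\mathbf{x}\rfloor+\mathbf{d}\cdot\mathbf{m}$ together with $|e|\cdot\mathbf{m}=|f|\cdot\mathbf{m}$ makes $\Delta_{e,f}$ $\mathbb{Z}^d$-periodic, hence $\ge 0$ everywhere, so all summands above are non-negative and $\Delta_{e,f}(\mathbf{n}/p^j)=\Delta_{e,f}(\{\mathbf{n}/p^j\})$. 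The second is the Wilson-block congruence $m!/p^{v_p(m!)}\equiv(-1)^{v_p(m!)}\prod_{j\ge 0}(m_j!)\pmod p$ (with $m=\sum_j m_jp^j$ in base $p$), obtained by separating from $m!$ its factors divisible by $p$ and grouping the remaining ones into blocks of $p-1$ consecutive integers.

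For item (1), assume $|e|=|f|$ and $\Delta_{e,f}\ge 1$ on $\mathcal{D}_{e,f}$, and fix $\mathbf{v}\in\{0,\dots,p-1\}^d$, $\mathbf{n}\in\mathbb{N}^d$. If $\mathbf{v}\in\mathcal{Z}_p(\mathcal{Q}_{e,f})$, then some $\Delta_{e,f}(\mathbf{v}/p^{j_0})\ge 1$, so $\mathbf{v}/p^{j_0}\in\mathcal{D}_{e,f}$, so a component $\mathbf{d}$ of $e$ or $f$ has $\mathbf{d}\cdot\mathbf{v}\ge p^{j_0}\ge p$; hence $\mathbf{v}/p\in\mathcal{D}_{e,f}$ and, by hypothesis and periodicity, $v_p(\mathcal{Q}_{e,f}(\mathbf{v}+p\mathbf{n}))\ge\Delta_{e,f}(\mathbf{v}/p+\mathbf{n})=\Delta_{e,f}(\mathbf{v}/p)\ge 1$, so both members of \eqref{FastRef} lie in $p\mathbb{Z}_p$. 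If $\mathbf{v}\notin\mathcal{Z}_p(\mathcal{Q}_{e,f})$, then $\Delta_{e,f}(\mathbf{v}/p)=0$, so $\mathbf{v}/p\notin\mathcal{D}_{e,f}$, i.e.\ $\mathbf{d}\cdot\mathbf{v}\le p-1$ for \emph{every} component $\mathbf{d}$ of $e$ or $f$; consequently, for each $i$, the base-$p$ digits of $\mathbf{e}_i\cdot(\mathbf{v}+p\mathbf{n})=(\mathbf{e}_i\cdot\mathbf{v})+p(\mathbf{e}_i\cdot\mathbf{n})$ are $\mathbf{e}_i\cdot\mathbf{v}$ followed by the digits of $\mathbf{e}_i\cdot\mathbf{n}$, and likewise for $\mathbf{f}_i$. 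Feeding this into Legendre's formula gives $v_p(\mathcal{Q}_{e,f}(\mathbf{v}+p\mathbf{n}))=v_p(\mathcal{Q}_{e,f}(\mathbf{n}))=v_p(\mathcal{Q}_{e,f}(\mathbf{v}))+v_p(\mathcal{Q}_{e,f}(\mathbf{n}))=:V$ (using $|e|=|f|$ and $v_p(\mathcal{Q}_{e,f}(\mathbf{v}))=0$), and feeding it into the Wilson-block congruence shows $p^{-V}\mathcal{Q}_{e,f}(\mathbf{v}+p\mathbf{n})\equiv p^{-V}\mathcal{Q}_{e,f}(\mathbf{v})\mathcal{Q}_{e,f}(\mathbf{n})\pmod p$ — the extra factor contributed by the digit $\mathbf{e}_i\cdot\mathbf{v}$, resp.\ $\mathbf{f}_i\cdot\mathbf{v}$, reassembling exactly $\mathcal{Q}_{e,f}(\mathbf{v})$. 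Hence $\mathcal{Q}_{e,f}(\mathbf{v}+p\mathbf{n})\equiv\mathcal{Q}_{e,f}(\mathbf{v})\mathcal{Q}_{e,f}(\mathbf{n})\pmod{p^{V+1}}$, a fortiori \eqref{FastRef}. For the ``Furthermore'' clause, if $\mathcal{Q}_{e,f}$ has the $p$-Lucas property for \emph{all} primes the dichotomy forces case (1); then for each $k$ with $\mathbf{n}_k\in\mathcal{Z}_p(\mathcal{Q}_{e,f})$ the argument above yields a component $\mathbf{d}^{(k)}$ with $\mathbf{d}^{(k)}\cdot\mathbf{n}_k\ge p$, and since $\{\mathbf{n}/p^{k+1}\}\ge\mathbf{n}_k/p$ coordinatewise we get $\{\mathbf{n}/p^{k+1}\}\in\mathcal{D}_{e,f}$, hence $\Delta_{e,f}(\mathbf{n}/p^{k+1})\ge 1$; as the exponents $k+1$ are pairwise distinct, $v_p(\mathcal{Q}_{e,f}(\mathbf{n}))=\sum_{j\ge1}\Delta_{e,f}(\mathbf{n}/p^j)\ge\alpha_p(\mathcal{Q}_{e,f},\mathbf{n})$, and $\mathcal{Q}_{e,f}(\mathbf{n})\in\mathbb{Z}$ gives the divisibility.

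For item (2) I would first note that integrality forces $|e|\ge|f|$ coordinatewise, because $v_p(\mathcal{Q}_{e,f}(p\mathbf{1}_k))=(|e|^{(k)}-|f|^{(k)})+v_p(\mathcal{Q}_{e,f}(\mathbf{1}_k))$ must be $\ge 0$ for all $p$ while $v_p(\mathcal{Q}_{e,f}(\mathbf{1}_k))=0$ for almost all $p$. If $|e|\ne|f|$, choose $k$ with $|e|^{(k)}>|f|^{(k)}$; then for every prime $p\nmid\mathcal{Q}_{e,f}(\mathbf{1}_k)$ we have $v_p(\mathcal{Q}_{e,f}(p\mathbf{1}_k))\ge 1$ but $v_p(\mathcal{Q}_{e,f}(\mathbf{1}_k))=0$, so the instance $(\mathbf{v},\mathbf{n})=(\mathbf{0},\mathbf{1}_k)$ of \eqref{FastRef} fails. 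If $|e|=|f|$ and $\Delta_{e,f}(\mathbf{x}_0)=0$ for some $\mathbf{x}_0\in\mathcal{D}_{e,f}$, the plan is to produce, for every large $p$, a pair $(\mathbf{v}_p,\mathbf{n}_p)\in(\{0,\dots,p-1\}^d)^2$ violating \eqref{FastRef}: one wants $\Delta_{e,f}(\mathbf{v}_p/p)=\Delta_{e,f}(\mathbf{n}_p/p)=0$ (so $\mathbf{v}_p,\mathbf{n}_p\notin\mathcal{Z}_p(\mathcal{Q}_{e,f})$, the terms $\Delta_{e,f}(\cdot/p^j)$ with $j\ge 2$ vanishing once $p>\max_{\mathbf{d}}|\mathbf{d}|_1$) while $\Delta_{e,f}\bigl((\mathbf{v}_p+p\mathbf{n}_p)/p^2\bigr)\ge 1$, which forces $v_p(\mathcal{Q}_{e,f}(\mathbf{v}_p+p\mathbf{n}_p))\ge 1$ although $\mathcal{Q}_{e,f}(\mathbf{v}_p)\mathcal{Q}_{e,f}(\mathbf{n}_p)$ is a $p$-adic unit. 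Producing these points means approximating, by $\mathbf{v}_p/p$ and $\mathbf{n}_p/p$ and on the correct side of each hyperplane $\mathbf{d}\cdot\mathbf{x}=m$ of the arrangement defining the step function $\Delta_{e,f}$, a zero of $\Delta_{e,f}$ lying in $\mathcal{D}_{e,f}$ and a nearby point from which an $O(1/p)$ displacement in the all-positive direction raises $\Delta_{e,f}$ to at least $1$; the existence of such a configuration is where the hypothesis (a zero of $\Delta_{e,f}$ in $\mathcal{D}_{e,f}$, together with the fact that $\Delta_{e,f}\not\equiv 0$ for disjoint $e,f$ with $|e|=|f|$, which shows $\Delta_{e,f}$ also takes values $\ge 1$ on $\mathcal{D}_{e,f}$) must be combined with the semialgebraic geometry of $\Delta_{e,f}$.

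The routine parts are the base-$p$ digit bookkeeping in item (1) and the ``Furthermore'' clause, and the one-line witness when $|e|\ne|f|$. The hard part will be the construction of the witnesses $(\mathbf{v}_p,\mathbf{n}_p)$ in the remaining case of item (2): one must control, uniformly in $p$, on which side of each relevant hyperplane the approximants fall, so that $\Delta_{e,f}$ vanishes at $\mathbf{v}_p/p$ and $\mathbf{n}_p/p$ but jumps above $0$ at $(\mathbf{v}_p+p\mathbf{n}_p)/p^2$. I expect this to follow the method used in \cite{Delaygue3} to prove the analogous ``finitely many $p$'' statement for $p$-integrality of $\mathcal{Q}_{e,f}$, transported from the condition ``$\Delta_{e,f}\ge 0$ on $[0,1]^d$'' to the condition ``$\Delta_{e,f}\ge 1$ on $\mathcal{D}_{e,f}$''.
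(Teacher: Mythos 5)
Your treatment of item (1), of the witness for $|e|\neq|f|$, and of the ``Furthermore'' clause is sound, and it is genuinely different from the paper in one respect: where the paper deduces the Lucas congruence from the multiplicative congruence \eqref{inter1}, itself based on a Dwork-type lemma ($\mathcal{Q}_{e,f}(\mathbf{n}p)/\mathcal{Q}_{e,f}(\mathbf{n})\in 1+p\mathbb{Z}_p$, Lemma 7 of \cite{Delaygue3}), you use Legendre's formula plus the classical Anton--Wilson digit-block congruence, which is self-contained and even yields the congruence modulo $p^{V+1}$. The valuation bookkeeping (periodicity of $\Delta_{e,f}$ when $|e|=|f|$, the digit structure of $\mathbf{e}_i\cdot(\mathbf{v}+p\mathbf{n})$ when $\mathbf{v}/p\notin\mathcal{D}_{e,f}$, the matching of the $(-1)$-signs) all checks out.

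The genuine gap is the second alternative of item (2), the case $|e|=|f|$ with $\Delta_{e,f}(\mathbf{x}_0)=0$ for some $\mathbf{x}_0\in\mathcal{D}_{e,f}$. You only state a plan there, and the plan itself is doubtful: you want, for every large $p$, digits $\mathbf{v}_p,\mathbf{n}_p$ with $\Delta_{e,f}(\mathbf{v}_p/p)=\Delta_{e,f}(\mathbf{n}_p/p)=0$ but $\Delta_{e,f}\bigl((\mathbf{v}_p+p\mathbf{n}_p)/p^2\bigr)\geq 1$, i.e.\ a \emph{valuation} discrepancy. Nothing in the hypothesis guarantees such a configuration exists: the zero $\mathbf{x}_0$ of $\Delta_{e,f}$ inside $\mathcal{D}_{e,f}$ may lie in (or be surrounded by) cells of the hyperplane arrangement on which $\Delta_{e,f}$ stays $0$, and the admissible displacement is further constrained to have the form $\mathbf{v}_p/p^2$ with $\Delta_{e,f}(\mathbf{v}_p/p)=0$; so the ``$O(1/p)$ jump into $\{\Delta_{e,f}\geq 1\}$'' is exactly what must be proved, and it is not clear it is even true in general. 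Tellingly, the paper's own proof of this case does not produce a valuation violation at all: it invokes Section 7.3.2 and (7.10) of \cite{Delaygue3}, which give a digit $\mathbf{a}_p$ with $\mathcal{Q}_{e,f}(\mathbf{a}_p)\in\mathbb{Z}_p^\times$ yet $\mathbf{a}_p/p\in\mathcal{D}_{e,f}$ (the floors $\lfloor\mathbf{e}_i\cdot\mathbf{a}_p/p\rfloor$, $\lfloor\mathbf{f}_i\cdot\mathbf{a}_p/p\rfloor$ do not all vanish but cancel in $\Delta_{e,f}$), so that the congruence \eqref{inter1} makes the ratio $\mathcal{Q}_{e,f}(\mathbf{a}_p+\mathbf{1}_knp)/\bigl(\mathcal{Q}_{e,f}(\mathbf{a}_p)\mathcal{Q}_{e,f}(\mathbf{1}_kn)\bigr)$ congruent to a rational function $R(n)\not\equiv 1$: a \emph{unit-level} (multiplicative) violation, available precisely when no valuation obstruction need exist. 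So to complete your proof you must either carry out and justify the semialgebraic construction you postponed (including showing it is always possible), or switch, as the paper does, to the unit-ratio argument imported from \cite{Delaygue3}. As written, the second half of item (2) — and hence the reduction of the ``Furthermore'' clause to case (1), which relies on the dichotomy — is not established.
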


\begin{Remark}
Theorem \ref{FactoCrit} implies that $\mathcal{Q}_{e,f}$ satisfies the $p$-Lucas property for all primes $p$ if and only if all Taylor coefficients at the origin of the associated mirror maps $z_{e,f,k}$, $1\leq k\leq d$, are integers (see Theorems $1$ and $3$ in \cite{Delaygue3}). Indeed, if $\Delta_{e,f}$ is nonnegative on $[0,1]^d$ and if $|e|\neq|f|$, then there exists $k$ in $\{1,\dots,d\}$ such that $|e|^{(k)}>|f|^{(k)}$.
\end{Remark}

Coster proved in \cite{Coster} similar results to Theorems \ref{theo gene}-\ref{FactoCrit} for the coefficients of certain algebraic power series. Namely, given a prime $p\geq 3$, integers $a_1,\dots,a_{p-1}$, and a sequence $A$ such that
$$
f_A(z)=(1+a_1z+\cdots+a_{p-1}z^{p-1})^{\frac{1}{1-p}},
$$ 
Coster proved that, for all natural integers $n$, we have 
$$
v_p\big(A(n)\big)\geq \left\lfloor\frac{\alpha_p(A,n)+1}{2}\right\rfloor.
$$

\subsection{Auxiliary results}

The proof of Theorem \ref{theo gene} rests on three results which may be useful to study other sequences.
\begin{propo}\label{propo Ap}
Let $p$ be a fixed prime and $A$ a $\mathbb{Z}_p$-valued sequence satisfying the $p$-Lucas property with $A(0)$ in $\mathbb{Z}_p^\times$. Let $\mathfrak{A}$ be the $\mathbb{Z}_p$-module spanned by $A$. Assume that
\begin{itemize}
\item[$(a)$] there exists a set $\mathfrak{B}$ of $\mathbb{Z}_p$-valued sequences with $\mathfrak{A}\subset\mathfrak{B}$ such that, for all $B$ in $\mathfrak{B}$, all $v$ in $\{0,\dots,p-1\}$ and all positive integers $n$, there exist $A'$ in $\mathfrak{A}$ and a sequence $(B_k)_{k\geq 0}$, $B_k$ in $\mathfrak{B}$, such that
$$
B(v+np)=A'(n)+\sum_{k=0}^\infty p^{k+1}B_k(n-k);
$$
\item[$(b)$] $f_A(z)$ is annihilated by a differential operator $\mathcal{L}$ in $\mathbb{Z}_p[z,\theta]$ such that at least one of the following conditions holds:
\begin{itemize}
\item $\mathcal{L}$ is of type I. 
\item $\mathcal{L}$ is of type II and $p-1\in\mathcal{Z}_p(A)$.
\end{itemize}
\end{itemize}
Then, for all $B$ in $\mathfrak{B}$ and all natural integers $n$, we have
$$
A(n)\in p^{\alpha_p(A,n)}\mathbb{Z}_p\quad\textup{and}\quad B(n)\in p^{\alpha_p(A,n)-1}\mathbb{Z}_p.
$$
\end{propo}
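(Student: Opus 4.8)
The plan is to induct on $n$, using the $p$-Lucas property of $A$ to peel off the lowest base-$p$ digit. Write $n = v + np'$ with $v \in \{0,\dots,p-1\}$ the lowest digit (abusing notation, let me instead write $n = v + mp$ with $v\in\{0,\dots,p-1\}$ and $m = \lfloor n/p\rfloor$). The base case $m=0$ is clear: $A(v) \in p^{\alpha_p(A,v)}\mathbb{Z}_p$ since $\alpha_p(A,v)$ is $1$ if $v \in \mathcal{Z}_p(A)$ and $0$ otherwise, and $B(v) \in \mathbb{Z}_p = p^{-1}\cdot p\mathbb{Z}_p \cap \mathbb{Z}_p$ handles the $B$-estimate when $\alpha_p = 0$; when $v \in \mathcal{Z}_p(A)$ we need $A(v) \in p\mathbb{Z}_p$ (true by definition of $\mathcal{Z}_p(A)$) and $B(v) \in \mathbb{Z}_p$ (trivially true). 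So the real content is the inductive step, and the two assertions must be proved \emph{together} because the bound on $B$ feeds the bound on $A$ and vice versa.

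For the inductive step, fix $B \in \mathfrak{B}$, $v \in \{0,\dots,p-1\}$, and $m \geq 1$, and apply hypothesis $(a)$ to get $A' \in \mathfrak{A}$ and $B_k \in \mathfrak{B}$ with $B(v+mp) = A'(m) + \sum_{k\geq 0} p^{k+1} B_k(m-k)$. By induction each $B_k(m-k) \in p^{\alpha_p(A,m-k)-1}\mathbb{Z}_p$, and I will need the inequality $k+1 + \alpha_p(A,m-k) - 1 = k + \alpha_p(A,m-k) \geq \alpha_p(A,m)$, which follows because subtracting $k$ from $m$ can lower the digit-count $\alpha_p$ by at most... well, this needs care: subtracting $k < p^{?}$ affects only finitely many low digits, and a digit carry can only increase or decrease $\alpha_p$ by a bounded amount compared to $k$. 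The clean statement to prove as a lemma is: for all $m,k$, $\alpha_p(A,m) \leq \alpha_p(A,m-k) + k$ (valid when $m \geq k$), which one checks by noting that going from $m-k$ to $m = (m-k)+k$ changes at most the number of base-$p$ digits touched by adding $k$, and each such change alters membership in $\mathcal{Z}_p(A)$ for at most one more digit per unit. Granting this, the tail sum $\sum_{k\geq 0} p^{k+1} B_k(m-k)$ lies in $p^{\alpha_p(A,m)}\mathbb{Z}_p$. So $B(v+mp) \equiv A'(m) \pmod{p^{\alpha_p(A,m)}\mathbb{Z}_p}$, and the problem reduces to bounding $v_p(A'(m))$ for $A' \in \mathfrak{A}$.

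Now the heart of the matter: I must show that every $A' \in \mathfrak{A}$ (the $\mathbb{Z}_p$-span of $A$, i.e. $A' = \lambda A$ for $\lambda \in \mathbb{Z}_p$ — or more precisely $\mathfrak{A}$ consists of the translates/shifts generated under the recurrence, I'd need to re-read, but presumably $\mathfrak{A}$ is closed under the operations that hypothesis $(a)$ produces) satisfies $v_p(A'(m)) \geq \alpha_p(A,m) + \mathrm{const}$ relating its "$\mathfrak{A}$-level" to $\alpha_p$, \emph{and} that $v_p(A(v+mp)) \geq \alpha_p(A,v+mp)$ using $\alpha_p(A,v+mp) = \alpha_p(A,m) + [v \in \mathcal{Z}_p(A)]$. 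This is where the differential operator $\mathcal{L}$ enters and is the main obstacle. The $p$-Lucas property already gives $A(v+mp) \equiv A(v)A(m) \pmod p$, which handles the $\alpha_p(A,v+mp) \leq \alpha_p(A,m)+1$ part modulo $p$; to boost to the full power of $p$ one needs to control the higher congruences, and this is exactly what types I and II are engineered for. Concretely, I expect the argument runs: write the recurrence coming from $\mathcal{L} f_A = 0$ as $P_0(n) A(n) = -\sum_{k\geq 1} \text{(shift terms)} A(n-k)$, observe $P_0(\mathbb{Z}_p^\times) \subset \mathbb{Z}_p^\times$ lets us invert $P_0(n)$ when $p \nmid \text{(top digit of the relevant index)}$, and the factor $\prod_{i=1}^{k-1}(\theta+i)^2$ (type I) or $(\theta+1)$ (type II) contributes the squared (resp. single) power of $p$ needed to absorb one unit of $\alpha_p$ when a carry occurs across a $p$-boundary. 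The bookkeeping — tracking which indices have which $p$-adic valuation of $P_0$, how the squares in type I give the exact factor to cover a vanishing digit, and why type II needs the extra hypothesis $p-1 \in \mathcal{Z}_p(A)$ — will be the technically demanding part, and I'd organize it as a separate lemma about $\mathbb{Z}_p$-valued solutions of type I/II operators before assembling the induction. The $B$-estimate then follows from the $A$-estimate by the displayed identity in $(a)$ with exactly one power of $p$ lost, as computed above.
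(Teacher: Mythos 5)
Your overall strategy (a joint induction in which the bound $B(n)\in p^{\alpha_p(A,n)-1}\mathbb{Z}_p$ and the bound $A(n)\in p^{\alpha_p(A,n)}\mathbb{Z}_p$ feed each other through hypothesis $(a)$, together with the digit inequality $\alpha_p(A,m-k)\geq\alpha_p(A,m)-k$, which indeed holds but only because $A(0)\in\mathbb{Z}_p^{\times}$ forces $0\notin\mathcal{Z}_p(A)$) is the right skeleton and matches the paper's use of $(a)$. However, there is a genuine gap at exactly the point you label ``the heart of the matter'': you never prove the step where the extra power of $p$ is gained, namely that if the lowest digit $n_0$ of $n=n_0+mp$ lies in $\mathcal{Z}_p(A)$ and $\alpha_p(A,m)\geq r$, then $A(n_0+mp)\in p^{r+1}\mathbb{Z}_p$. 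You only write what you ``expect the argument runs'' and defer ``the bookkeeping'' to an unwritten lemma, but this lemma is the actual content of the proposition; without it your induction proves nothing beyond what the $p$-Lucas property already gives (the case $r=1$). In the paper this is Lemma 1: one writes the recurrence $\sum_{k=0}^{q}P_k(n-k)A(n-k)=0$ coming from $\mathcal{L}f_A=0$, shows that for $v<k\leq q$ the out-of-range terms $P_k(v-k+mp)A(v-k+mp)$ lie in $p^{r+1}\mathbb{Z}_p$ --- for type I via the factors $\prod_{i=1}^{k-1}(X+i)^2$, whose $p$-adic valuation is shown by an explicit computation (writing $k-v=a+bp$) to compensate the loss of $\alpha_p$ caused by the borrow, and for type II via $P_2(X)\in(X+1)\mathbb{Z}_p[X]$ combined with the hypothesis $p-1\in\mathcal{Z}_p(A)$ --- and then observes that the two vectors $\big(\beta(v,m)\big)_{0\leq v\leq p-1}$ (where $A(v+mp)=\beta(v,m)p^{r}$) and $\big(A(v)\big)_{0\leq v\leq p-1}$ satisfy the same recurrence mod $p$ with unit leading values $P_0(v)$, hence $\beta(v,m)\equiv\gamma(m)A(v)\bmod p$, which gives the extra factor $p$ at $v=n_0\in\mathcal{Z}_p(A)$. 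None of this mechanism (in particular why the \emph{squares} in type I suffice, and why type II needs $p-1\in\mathcal{Z}_p(A)$) appears in your proposal.

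A secondary structural problem: your single induction on $n$ (peeling the lowest digit) cannot accommodate the proportionality argument just described, because that argument needs control of $A(v+mp)$ for \emph{all} residues $v\in\{0,\dots,p-1\}$, and some of these indices exceed the $n$ you are currently treating. This is why the paper instead inducts on the target valuation $r$ (assertions $\mathcal{U}(r)$ and $\mathcal{V}(r)$ quantified over all $n$), with only a nested, subordinate induction on $n$ inside the proof of $\mathcal{U}(r_0)$, and states its Lemma 1 under the global hypothesis $\mathcal{U}_A(r)$. Finally, a small point: $\mathfrak{A}$ is just $\{\lambda A:\lambda\in\mathbb{Z}_p\}$, so once the bound for $A$ at level $r$ is available the bound for $A'(m)$ is immediate; no separate ``$\mathfrak{A}$-level'' constant is needed, and your hesitation about what $\mathfrak{A}$ is should be resolved before the induction is set up.
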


In Proposition \ref{propo Ap} and throughout this article, if $(A(n))_{n\geq 0}$ is a sequence taking its values in $\mathbb{Z}$ or $\mathbb{Z}_p$, then, for all negative integers $n$, we set $A(n):=0$.
Therefore, to prove Theorem~\ref{theo gene}, it suffices to show that $\mathfrak{S}_{e,f}$ satisfies the $p$-Lucas property and Condition~$(a)$ of Proposition \ref{propo Ap} with $\mathfrak{B}=\{\mathfrak{S}_{e,f}^g\,:\,g\in\mathfrak{F}_p^d\}$. To that purpose, we shall prove the following results.

\begin{propo}\label{propo Lucas}
Let $e$ and $f$ be disjoint tuples of vectors in $\mathbb{N}^d$ such that $|e|=|f|$ and, for all $\mathbf{x}$ in $\mathcal{D}_{e,f}$, $\Delta_{e,f}(\mathbf{x})\geq 1$. Assume that $e$ is $1$-admissible. Then, $\mathfrak{S}_{e,f}$ is integer-valued and satisfies the $p$-Lucas property for all primes $p$.
\end{propo}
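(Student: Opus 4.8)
\textbf{Proof strategy for Proposition \ref{propo Lucas}.}

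The plan is to reduce everything to the known factorial-ratio case by interchanging the order of summation. Recall that $\mathfrak{S}_{e,f}(m)=\sum_{|\mathbf{n}|=m}\mathcal{Q}_{e,f}(\mathbf{n})$, where the inner object $\mathcal{Q}_{e,f}$ is a family of integers precisely because $|e|=|f|$ and $\Delta_{e,f}\geq 1$ on $\mathcal{D}_{e,f}$ forces $\Delta_{e,f}\geq 0$ on all of $[0,1]^d$ (Landau's criterion). So integrality of $\mathfrak{S}_{e,f}$ is immediate. The heart of the matter is the $p$-Lucas congruence. First I would invoke Theorem \ref{FactoCrit}(1): under our hypotheses $\mathcal{Q}_{e,f}$ satisfies the $p$-Lucas property for all primes $p$, i.e. for $\mathbf{v}\in\{0,\dots,p-1\}^d$ and $\mathbf{m}\in\mathbb{N}^d$ we have $\mathcal{Q}_{e,f}(\mathbf{v}+\mathbf{m}p)\equiv\mathcal{Q}_{e,f}(\mathbf{v})\mathcal{Q}_{e,f}(\mathbf{m})\bmod p$. (Strictly speaking Theorem \ref{FactoCrit} is about general $e,f$; here $e$ being $1$-admissible guarantees in particular that the relevant components are large enough for the multisum to be well-behaved, but for the inner factorial ratio only the Landau condition is needed.)

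Next, fix $v\in\{0,\dots,p-1\}$ and $n\in\mathbb{N}$, and expand
$$
\mathfrak{S}_{e,f}(v+np)=\sum_{\mathbf{N}\in\mathbb{N}^d,\ |\mathbf{N}|=v+np}\mathcal{Q}_{e,f}(\mathbf{N}).
$$
Write each $\mathbf{N}=\mathbf{v}'+\mathbf{n}'p$ with $\mathbf{v}'\in\{0,\dots,p-1\}^d$ (base-$p$ digit) and $\mathbf{n}'\in\mathbb{N}^d$. The constraint $|\mathbf{N}|=v+np$ becomes $|\mathbf{v}'|+p|\mathbf{n}'|=v+np$, hence $|\mathbf{v}'|\equiv v\bmod p$. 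Since $0\le|\mathbf{v}'|\le d(p-1)$, there are only boundedly many possible values $|\mathbf{v}'|=v+jp$ for $j\in\{0,\dots,d-1\}$, and correspondingly $|\mathbf{n}'|=n-j$. Using the $p$-Lucas property of $\mathcal{Q}_{e,f}$ termwise,
$$
\mathfrak{S}_{e,f}(v+np)\equiv\sum_{j=0}^{d-1}\ \Biggl(\sum_{\substack{\mathbf{v}'\in\{0,\dots,p-1\}^d\\ |\mathbf{v}'|=v+jp}}\mathcal{Q}_{e,f}(\mathbf{v}')\Biggr)\Biggl(\sum_{\substack{\mathbf{n}'\in\mathbb{N}^d\\ |\mathbf{n}'|=n-j}}\mathcal{Q}_{e,f}(\mathbf{n}')\Biggr)\bmod p.
$$
The $j=0$ term is exactly $\bigl(\sum_{|\mathbf{v}'|=v}\mathcal{Q}_{e,f}(\mathbf{v}')\bigr)\mathfrak{S}_{e,f}(n)=\mathfrak{S}_{e,f}(v)\mathfrak{S}_{e,f}(n)$, which is the desired main term. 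So the whole task becomes showing that the remaining terms $j\in\{1,\dots,d-1\}$ vanish modulo $p$, and for this it suffices to prove that for every $j\ge 1$ the digit-sum $\sum_{\mathbf{v}'\in\{0,\dots,p-1\}^d,\,|\mathbf{v}'|=v+jp}\mathcal{Q}_{e,f}(\mathbf{v}')\equiv 0\bmod p$.

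The main obstacle, then, is this vanishing of the "carry" contributions. This is where $1$-admissibility of $e$ enters: it is designed to guarantee that whenever $|\mathbf{v}'|\ge p$ (so that at least one coordinate of $\mathbf{v}'$ is forced to be large, or more precisely $\mathbf{v}'$ sits outside a suitable region), the factorial ratio $\mathcal{Q}_{e,f}(\mathbf{v}')$ is already divisible by $p$. Concretely, the condition $|\mathbf{v}'|=v+jp$ with $j\ge 1$ means $\mathbf{v}'/p$ lands in $\mathcal{D}_{e,f}$ for an appropriate scaling — there is a component $\mathbf{d}$ of $e$ with $\mathbf{d}\cdot\mathbf{v}'\ge p$ (this is precisely where $1$-admissibility forces some $\mathbf{e}_i\ge\mathbf{1}$, so $\mathbf{e}_i\cdot\mathbf{v}'\ge|\mathbf{v}'|\ge p$) — and then the hypothesis $\Delta_{e,f}(\mathbf{v}'/p)\ge 1$ together with Legendre's formula for $v_p$ of a factorial (the $p$-adic valuation of $\mathcal{Q}_{e,f}(\mathbf{v}')$ equals $\sum_{\ell\ge 1}\Delta_{e,f}(\mathbf{v}'/p^\ell)$) yields $v_p\bigl(\mathcal{Q}_{e,f}(\mathbf{v}')\bigr)\ge 1$. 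Hence every individual term with $j\ge 1$ is divisible by $p$, the sum over $j\ge 1$ drops out, and the congruence $\mathfrak{S}_{e,f}(v+np)\equiv\mathfrak{S}_{e,f}(v)\mathfrak{S}_{e,f}(n)\bmod p$ follows. I expect the delicate bookkeeping to be in matching the $1$-admissibility conditions with the exact statement "$\mathbf{v}'/p\in\mathcal{D}_{e,f}$ whenever $|\mathbf{v}'|\ge p$", but once that is pinned down the rest is a clean application of Legendre's formula and the already-established $p$-Lucas property of $\mathcal{Q}_{e,f}$.
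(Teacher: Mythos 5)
Your proposal is correct and is essentially the paper's own proof: split each index $\mathbf{N}=\mathbf{v}'+\mathbf{n}'p$, kill the carry terms (those with $|\mathbf{v}'|\geq p$) by showing $\mathbf{v}'/p\in\mathcal{D}_{e,f}$ so that $\Delta_{e,f}(\mathbf{v}'/p)\geq 1$ and the valuation formula gives $p\mid\mathcal{Q}_{e,f}$, then apply the $p$-Lucas property of $\mathcal{Q}_{e,f}$ from Theorem \ref{FactoCrit} and factor the remaining double sum. The only point to tighten is your parenthetical on $1$-admissibility: it does not force some $\mathbf{e}_i\geq\mathbf{1}$, since the definition is a disjunction; in the other branch you instead pick, by pigeonhole, a coordinate $k$ with $(\mathbf{v}')^{(k)}\geq p/d$ and an $\mathbf{e}_i\geq d\mathbf{1}_k$, which again yields $\mathbf{e}_i\cdot\mathbf{v}'\geq p$ and hence $\mathbf{v}'/p\in\mathcal{D}_{e,f}$ --- exactly the step the paper performs.
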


\begin{propo}\label{propo Gamma}
Let $p$ be a fixed prime. We write $\Gamma_p$ for the $p$-adic Gamma function. Then, there exists a function $g$ in $\mathfrak{F}_p^2$ such that, for all natural integers $n$ and $m$, we have
$$
\frac{\Gamma_p\big((m+n)p\big)}{\Gamma_p(mp)\Gamma_p(np)}=1+g(m,n)p.
$$
\end{propo}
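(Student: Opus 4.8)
\subsection*{Proof proposal}

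The plan is first to rewrite the left‐hand side as an explicit product, and then to recognise that the resulting function of $(m,n)$ lies in $\mathfrak{F}_p^2$ by producing, on each box, polynomial approximations with coefficients in $\mathbb{Z}_p$.

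First I would use the recursion defining the $p$‑adic Gamma function, namely $\Gamma_p(0)=1$, $\Gamma_p(j+1)=-j\,\Gamma_p(j)$ when $p\nmid j$, and $\Gamma_p(j+1)=-\Gamma_p(j)$ when $p\mid j$. Telescoping between consecutive multiples of $p$ gives, for every $n\in\mathbb{N}$, the identity $\Gamma_p(np)=(-1)^{np}\prod_{1\le j\le np,\ p\nmid j}j$, whence, after reindexing the numerator by $j\mapsto mp+j$,
\[
\frac{\Gamma_p\big((m+n)p\big)}{\Gamma_p(mp)\,\Gamma_p(np)}
=\frac{\displaystyle\prod_{\substack{mp<j\le(m+n)p\\ p\nmid j}}j}{\displaystyle\prod_{\substack{1\le j\le np\\ p\nmid j}}j}
=\prod_{\substack{1\le j\le np\\ p\nmid j}}\Big(1+\frac{mp}{j}\Big).
\]
Since $p\nmid j$, each factor lies in $1+p\mathbb{Z}_p$, so the product lies in $1+p\mathbb{Z}_p$; hence $g(m,n):=\frac1p\big(\Gamma_p((m+n)p)/(\Gamma_p(mp)\Gamma_p(np))-1\big)$ is a well‑defined element of $\mathbb{Z}_p$, the asserted identity holds by construction, and it persists when $m$ or $n$ is $0$ (the product is then empty and $g=0$).

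It remains to prove $g\in\mathfrak{F}_p^2$. Grouping the product over $j=kp+i$ with $0\le k\le n-1$, $1\le i\le p-1$, write $1+pg(m,n)=\prod_{k=0}^{n-1}W_k(m)$ where $W_k(m)=\prod_{i=1}^{p-1}\big(1+mp/(kp+i)\big)\in\mathbb{Z}_p[m]$ for each fixed $k$. The key elementary observation is that, when $p$ is odd, $W_k(m)\equiv1\pmod{p^2}$: indeed $W_k(m)-1\equiv mp\sum_{i=1}^{p-1}(kp+i)^{-1}$ modulo $m^2p^2\mathbb{Z}_p$, and $\sum_{i=1}^{p-1}(kp+i)^{-1}\equiv\sum_{i=1}^{p-1}i^{-1}\equiv\tfrac{p(p-1)}2\equiv0\pmod p$. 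So $1+pg=\prod_{k=0}^{n-1}\big(1+p^2\eta_k(m)\big)$ with $\eta_k\in\mathbb{Z}_p[m]$, and therefore $g(m,n)=\sum_{\ell\ge1}p^{2\ell-1}e_\ell\big(\eta_0(m),\dots,\eta_{n-1}(m)\big)$; in particular $g$ is $p$ times a $\mathbb{Z}_p$‑valued function. Now fix a box $\{0,\dots,K\}^2$ and a precision $p^M$. Using the Fermat--Euler congruence $(kp+i)^{-1}\equiv(kp+i)^{\varphi(p^{M'})-1}\pmod{p^{M'}}$ for a suitable $M'\ge M$ (legitimate since $\gcd(kp+i,p)=1$), one replaces $\eta_k(m)$, modulo a high power of $p$, by the value at $k$ of a fixed polynomial $E(k,m)\in\mathbb{Z}_p[k,m]$; Newton's identities rewrite each $e_\ell$ in terms of the power sums $\sum_{k=0}^{n-1}E(k,m)^t$, which are Faulhaber polynomials $\sum_{k=0}^{n-1}k^a=(\text{polynomial in }n)$ whose $p$‑adic denominators are bounded, via von Staudt--Clausen, by a constant depending only on $K$ and $M$. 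Since the terms with $2\ell-1\ge M$ vanish modulo $p^M$ and the surplus powers $p^{2\ell-1}$ absorb the bounded denominators of the surviving ones, the truncated sum is a polynomial in $\mathbb{Z}_p[x,y]$ congruent to $g$ modulo $p^M$ on the box; letting $M$ vary yields the required approximating sequence. For $p=2$ one has only $W_k\equiv1\pmod 2$; here one records the elementary congruence $g(m,n)\equiv mn\pmod 2$ as the base case and iterates the same scheme.

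The step demanding the most care is precisely this last valuation bookkeeping. It is genuinely false that an arbitrary $\mathbb{Z}_p$‑valued polynomial function on a box is congruent modulo $p^M$ to a polynomial with $\mathbb{Z}_p$‑coefficients (for instance $\binom{x}{p}$ and $(x^p-x)/p$ are not), so the argument must exploit the special structure of $g$: that it equals $p$ times a combination of elementary symmetric functions of the $\eta_k$, which are themselves values of bounded‑degree polynomials once one fixes the precision, so that the Faulhaber denominators occurring have $p$‑adic valuation uniformly bounded in $K$ and $M$ and strictly dominated by the accompanying powers of $p$. Tracking these valuations carefully — and handling $p=2$, where the weaker congruence $W_k\equiv1\pmod2$ forces one to peel off the term $g(m,n)\equiv mn\pmod2$ by hand before iterating — is the technical heart of the proof.
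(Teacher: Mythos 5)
Your reduction to the product $\prod_{1\le j\le np,\ p\nmid j}\bigl(1+\tfrac{mp}{j}\bigr)$ is exactly the paper's starting point (up to swapping $m$ and $n$), and the observation that each block $W_k(m)$ is $\equiv 1\pmod{p^2}$ for odd $p$ is correct and a nice way to organize the product. The overall architecture (symmetric functions of the reciprocals, Newton's identities, Faulhaber, von Staudt--Clausen) is also the paper's. The problem is in the one step you yourself identify as the technical heart, and there the bookkeeping you propose does not work. You approximate $(kp+i)^{-1}$ by $(kp+i)^{\varphi(p^{M'})-1}$, a polynomial in $k$ of degree $\varphi(p^{M'})-1\sim p^{M'-1}(p-1)$; the Faulhaber polynomials $\sum_{k=0}^{n-1}k^a$ for $a$ of that size have coefficients with $p$-adic valuation as low as $-1-v_p(a+1)$, i.e.\ down to about $-M'$. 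Since $M'\ge M$, these denominators are \emph{not} ``bounded by a constant depending only on $K$ and $M$'' in any useful sense, and the surplus factors $p^{2\ell-1}$ with $2\ell-1<M$ (indeed only $p^1$ for $\ell=1$) cannot absorb them. So the truncated sums you construct need not lie in $\mathbb{Z}_p[x,y]$, which is what membership in $\mathfrak{F}_p^2$ requires; and, as you note yourself, being $\mathbb{Z}_p$-valued is not a substitute.

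The mechanism that actually saves the day -- and the one the paper uses -- is to tie the degree in $k$ linearly to the power of $p$: expanding $(k_0+kp)^{-t}=k_0^{-t}\sum_{s\ge 0}(-1)^s\binom{t-1+s}{s}(k/k_0)^sp^s$ puts an explicit factor $p^s$ in front of the degree-$s$ term, so after summing over $k$ the Faulhaber denominator, of valuation at most $1+v_p(s+1)=O(\log_p s)$, is dominated by $p^s$ for every $s\ge 2$, with the single exceptional case $p=2$, $s=1$ handled by the identity $B_1=-\tfrac12$, $B_0=1$ (which makes the degree-one contribution $X(X-1)/1$ integral). Your Fermat--Euler substitution does secretly carry a factor $p^a$ on the coefficient of $k^a$ (from the binomial expansion of $(kp+i)^e$), so the approach is probably repairable, but that is not the argument you gave, and you would still have to track these valuations through $E(k,m)^t$, the divisions by $\ell$ in Newton's identities, and the $p=2$ edge case. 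As written, the claim that the $p^{2\ell-1}$ prefactors absorb the denominators is false, and the proof of $g\in\mathfrak{F}_p^2$ is not established.
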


\subsection{Application of Theorem \ref{theo gene}}\label{Application}

By applying Theorem \ref{theo gene}, we obtain similar results to Conjectures A-C for numbers satisfying Ap\'ery-like recurrence relations which we list below. Characters in brackets in the last column of the following table form the sequence number in the Online Encyclopedia of Integer Sequences \cite{OEIS}. 
\begin{small}
$$
{\renewcommand{\arraystretch}{2.5}
\begin{array}{|c|c|c|c|}
  \hline
  \textup{Sequence} & \mathcal{Q}_{e,f}(n_1,n_2) & \mathcal{L} & \textup{Reference}\\
  \hline
  \displaystyle{\sum_{k=0}^n\binom{n}{k}^2\binom{n+k}{k}^2} & \displaystyle{\frac{(2n_1+n_2)!^2}{n_1!^4n_2!^2}} & \textup{\cite[$(\gamma)$]{Zudilin}} & \textup{Ap\'ery numbers (A005259)} \\\hline
  \displaystyle{\sum_{k=0}^n\binom{n}{k}^2\binom{n+k}{k}} & \displaystyle{\frac{(2n_1+n_2)!(n_1+n_2)!}{n_1!^3n_2!^2}} & \textup{\cite[\textbf{D}]{Zagier}} & \textup{Ap\'ery numbers (A005258)} \\\hline
  \displaystyle{\binom{2n}{n}=\sum_{k=0}^n\binom{n}{k}^2} & \displaystyle{\frac{(n_1+n_2)!^2}{n_1!^2n_2!^2}} & \textup{type I} & \textup{\parbox{5cm}{\centering Central binomial\\coefficients (A000984)}} \\\hline
  \displaystyle{\sum_{k=0}^n\binom{n}{k}^3} & \displaystyle{\frac{(n_1+n_2)!^3}{n_1!^3n_2!^3}} & \textup{\cite[\textbf{A}]{Zagier}} & \textup{Franel numbers (A000172)} \\\hline
  \displaystyle{\sum_{k=0}^n\binom{n}{k}^4} & \displaystyle{\frac{(n_1+n_2)!^4}{n_1!^4n_2!^4}} & \textup{\cite{Franel1},\cite{Franel2}} & \textup{(A005260)} \\\hline
  \displaystyle{\sum_{k=0}^n\binom{n}{k}\binom{2k}{k}\binom{2(n-k)}{n-k}} & \displaystyle{\frac{(n_1+n_2)!(2n_1)!(2n_2)!}{n_1!^3n_2!^3}} & \textup{\cite[(d)]{Zudilin}}& \textup{(A081085)} \\\hline
  \displaystyle{\sum_{k=0}^n\binom{n}{k}^2\binom{2k}{k}} & \displaystyle{\frac{(n_1+n_2)!^2(2n_1)!}{n_1!^4n_2!^2}} & \textup{\cite[\textbf{C}]{Zagier}} & \textup{\parbox{5cm}{\centering Number of abelian squares \\ of length $2n$ over an alphabet \\ with $3$ letters (A002893)}} \\\hline
  \displaystyle{\sum_{k=0}^n\binom{n}{k}^2\binom{2k}{k}\binom{2(n-k)}{n-k}} & \displaystyle{\frac{(n_1+n_2)!^2(2n_1)!(2n_2)!}{n_1!^4n_2!^4}} & \textup{\cite[($\alpha$)]{Zudilin}} & \textup{Domb numbers (A002895)} \\\hline
  \displaystyle{\sum_{k=0}^n\binom{2k}{k}^2\binom{2(n-k)}{n-k}^2} & \displaystyle{\frac{(2n_1)!^2(2n_2)!^2}{n_1!^4n_2!^4}} & \textup{\cite[($\beta$)]{Zudilin}} & \textup{(A036917)} \\
  \hline
\end{array}
}
$$
\end{small}

All differential operators listed in the above table are of type I for all primes $p$, except the one associated with $A_5(n):=\sum_{k=0}^n\binom{n}{k}^4$ which reads
$$
\mathcal{L}_5=\theta^3-z2(2\theta+1)(3\theta^2+3\theta+1)-z^24(\theta+1)(4\theta+5)(4\theta+3).
$$
Hence $\mathcal{L}_5$ is of type II for all primes $p$. By a result of Calkin \cite[Proposition $3$]{Calkin}, for all primes $p$, we have $A_5(p-1)\equiv 0\mod p$, \textit{i.\,e.} $p-1$ is in $\mathcal{Z}_p(A_5)$. Thus we can apply Theorem~\ref{theo gene} to $A_5$. 

Observe that the generating function of the central binomial coefficients is annihilated by the differential operator $\mathcal{L}=\theta-z(4\theta+2)$ which is of type I for all primes $p$.

According to the recurrence relation found by Almkvist and Zudilin (see Case (d) in \cite{Zudilin}), $A_6(n):=\sum_{k=0}^n\binom{n}{k}\binom{2k}{k}\binom{2(n-k)}{n-k}$ is also Sequence \textbf{E} in Zagier's list \cite{Zagier}, that is
$$
A_6(n)=\sum_{k=0}^{\lfloor n/2\rfloor}4^{n-2k}\binom{n}{2k}\binom{2k}{k}^2.
$$

Furthermore, according to \cite{Shallit}, Domb numbers $A_8(n)=\sum_{k=0}^n\binom{n}{k}^2\binom{2k}{k}\binom{2(n-k)}{n-k}$ are also the numbers of abelian squares of length $2n$ over an alphabet with $4$ letters. 
\medskip

Now we consider the numbers $C_i(n)$ of abelian squares of length $2n$ over an alphabet with $i$ letters which, for all positive integers $i\geq 2$, satisfy (see \cite{Shallit})
$$
C_i(n)=\underset{k_1,\dots,k_i\in\mathbb{N}}{\sum_{k_1+\cdots +k_i=n}}\left(\frac{n!}{k_1!\cdots k_i!}\right)^2.
$$
According to \cite{RandomWalks}, $C_i(n)$ is also the $2n$-th moment of the distance to the origin after $i$ steps traveled by a walk in the plane with unit steps in random directions. 

To apply Theorem \ref{theo gene} to $C_i$, it suffices to show that $f_{C_i}$ is annihilated by a differential operator of type I for all primes $p$. Indeed, by Proposition $1$ and Theorem $2$ in \cite{RandomWalks}, for all $j\geq 2$, $C_j(n)$ satisfies the recurrence relation of order $\lceil j/2\rceil$ with polynomial coefficients of degree $j-1$:
\begin{equation}\label{RecWalks}
n^{j-1}C_j(n)+\sum_{i\geq 1}\left(n^{j-1}\sum_{\alpha_1,\dots,\alpha_i}\prod_{k=1}^i(-\alpha_k)(j+1-\alpha_k)\left(\frac{n-k}{n-k+1}\right)^{\alpha_k-1}\right)C_j(n-i)=0,
\end{equation}
where the sum is over all sequences of positive integers $\alpha_1,\dots,\alpha_i$ satisfying $\alpha_k\leq j$ and $\alpha_{k+1}\leq\alpha_k-2$. We consider $i\geq 2$ and $i$ positive integers $\alpha_1,\dots,\alpha_i\leq j$ satisfying $\alpha_{k+1}\leq\alpha_k-2$. We have
$$
n^{j-1}\prod_{k=1}^i\left(\frac{n-k}{n-k+1}\right)^{\alpha_k-1}=\frac{n^{j-1}}{n^{\alpha_1-1}}\left(\prod_{k=1}^{i-1}(n-k)^{\alpha_k-\alpha_{k+1}}\right)(n-i)^{\alpha_i-1},
$$
with $j-\alpha_1\geq 0$, $\alpha_k-\alpha_{k+1}\geq 2$ and $\alpha_i-1\geq 0$. Then, $f_{C_j}(z)$ is annihilated by a differential operator $\mathcal{L}=P_0(\theta)+zP_1(\theta)+\cdots+z^qP_q(\theta)$ with $P_0(\theta)=\theta^{j-1}$ and, for all $i\geq 2$,
$$
P_i(\theta)\in\prod_{k=1}^{i-1}(\theta+i-k)^2\mathbb{Z}[\theta]\subset\prod_{k=1}^{i-1}(\theta+k)^2\mathbb{Z}[\theta],
$$
so that $\mathcal{L}$ is of type I for all primes $p$, as expected.

\subsection{Structure of the article}

In Section \ref{section theo 3}, we use several results of \cite{Delaygue3} to prove Theorem~\ref{FactoCrit}. Section \ref{section rec} is devoted to the proofs of Theorem \ref{Laurent Ap} and Proposition \ref{propo Ap}. In particular, we prove Lemma \ref{lemme induction} which points out the role played by differential operators in our proofs. In Section \ref{section main proof}, we prove Theorem \ref{theo gene} by applying Proposition \ref{propo Ap} to $\mathfrak{S}_{e,f}$. It is the most technical part of this article.

\section{Proof of Theorem \ref{FactoCrit}}\label{section theo 3}

First, we prove that if $|e|=|f|$, then, for all primes $p$, all $\mathbf{a}$ in $\{0,\dots,p-1\}^d$ and all $\mathbf{n}$ in $\mathbb{N}^d$, we have
\begin{equation}\label{inter1}
\frac{\mathcal{Q}_{e,f}(\mathbf{a}+\mathbf{n}p)}{\mathcal{Q}_{e,f}(\mathbf{a})\mathcal{Q}_{e,f}(\mathbf{n})}\in\frac{\prod_{i=1}^u\prod_{j=1}^{\lfloor \mathbf{e}_i\cdot\mathbf{a}/p\rfloor}\left(1+\frac{\mathbf{e}_i\cdot\mathbf{n}}{j}\right)}{\prod_{i=1}^v\prod_{j=1}^{\lfloor \mathbf{f}_i\cdot\mathbf{a}/p\rfloor}\left(1+\frac{\mathbf{f}_i\cdot\mathbf{n}}{j}\right)}(1+p\mathbb{Z}_p).
\end{equation}
Indeed, we have
$$
\frac{\mathcal{Q}_{e,f}(\mathbf{a}+\mathbf{n}p)}{\mathcal{Q}_{e,f}(\mathbf{a})\mathcal{Q}_{e,f}(\mathbf{n})}=\frac{\mathcal{Q}_{e,f}(\mathbf{a}+\mathbf{n}p)}{\mathcal{Q}_{e,f}(\mathbf{a})\mathcal{Q}_{e,f}(\mathbf{n}p)}\cdot\frac{\mathcal{Q}_{e,f}(\mathbf{n}p)}{\mathcal{Q}_{e,f}(\mathbf{n})}.
$$
Since $|e|=|f|$, we can apply \cite[Lemma $7$]{Delaygue3} (\footnote{The proof of this lemma uses a lemma of Lang which contains an error. Fortunately, Lemma $7$ remains true. Details of this correction are presented in \cite[Section $2.4$]{Delaygue4}.}) with $\mathbf{c}=\mathbf{0}$, $\mathbf{m}=\mathbf{n}$ and $s=0$ which yields
$$
\frac{\mathcal{Q}_{e,f}(\mathbf{n}p)}{\mathcal{Q}_{e,f}(\mathbf{n})}\in 1+p\mathbb{Z}_p.
$$
Furthermore, we have
\begin{align*}
\frac{\mathcal{Q}_{e,f}(\mathbf{a}+\mathbf{n}p)}{\mathcal{Q}_{e,f}(\mathbf{a})\mathcal{Q}_{e,f}(\mathbf{n}p)}&=\frac{1}{\mathcal{Q}_{e,f}(\mathbf{a})}\frac{\prod_{i=1}^u\prod_{j=1}^{\mathbf{e}_i\cdot\mathbf{a}}(j+\mathbf{e}_i\cdot\mathbf{n}p)}{\prod_{i=1}^v\prod_{j=1}^{\mathbf{f}_i\cdot\mathbf{a}}(j+\mathbf{f}_i\cdot\mathbf{n}p)}\\
&=\frac{\prod_{i=1}^u\prod_{j=1}^{\mathbf{e}_i\cdot\mathbf{a}}\left(1+\frac{\mathbf{e}_i\cdot\mathbf{n}p}{j}\right)}{\prod_{i=1}^v\prod_{j=1}^{\mathbf{f}_i\cdot\mathbf{a}}\left(1+\frac{\mathbf{f}_i\cdot\mathbf{n}p}{j}\right)}\\
&\in\frac{\prod_{i=1}^u\prod_{j=1}^{\lfloor \mathbf{e}_i\cdot\mathbf{a}/p\rfloor}\left(1+\frac{\mathbf{e}_i\cdot\mathbf{n}}{j}\right)}{\prod_{i=1}^v\prod_{j=1}^{\lfloor \mathbf{f}_i\cdot\mathbf{a}/p\rfloor}\left(1+\frac{\mathbf{f}_i\cdot\mathbf{n}}{j}\right)}(1+p\mathbb{Z}_p),
\end{align*}
because, if $p$ does not divide $j$, then $1+(\mathbf{e}_i\cdot\mathbf{n}p)/j$ belongs to $1+p\mathbb{Z}_p$. This finishes the proof of \eqref{inter1}.
\medskip

Now we prove Assertion $(1)$ in Theorem \ref{FactoCrit}. Let $p$ be a fixed prime number. It is well known that, for all natural integers $n$, we have
$$
v_p(n!)=\sum_{\ell=1}^\infty\left\lfloor\frac{n}{p^\ell}\right\rfloor.
$$
Thus, for all vectors $\mathbf{n}$ in $\mathbb{N}^d$, we have 
$$
v_p\big(\mathcal{Q}_{e,f}(\mathbf{n})\big)=\sum_{\ell=1}^\infty\Delta_{e,f}\left(\frac{\mathbf{n}}{p^{\ell}}\right).
$$ 
Let fix $\mathbf{n}$ in $\mathbb{N}^d$ and $\mathbf{a}$ in $\{0,\dots,p-1\}^d$. Let $\{\cdot\}$ denote the fractional part function. For any vector of real numbers $\mathbf{x}=(x_1,\dots,x_d)$, we set $\{\mathbf{x}\}:=(\{x_1\},\dots,\{x_d\})$. Since $|e|=|f|$, we have 
$$
v_p\big(\mathcal{Q}_{e,f}(\mathbf{a}+\mathbf{n}p)\big)=\sum_{\ell=1}^\infty\Delta_{e,f}\left(\left\{\frac{\mathbf{a}+\mathbf{n}p}{p^\ell}\right\}\right)\geq\Delta_{e,f}\left(\frac{\mathbf{a}}{p}\right),
$$
because $\Delta_{e,f}$ is nonnegative on $[0,1]^d$. On the one hand, if $\mathbf{a}/p$ is in $\mathcal{D}_{e,f}$, then both $\mathcal{Q}_{e,f}(\mathbf{a}+\mathbf{n}p)$ and $\mathcal{Q}_{e,f}(\mathbf{a})\mathcal{Q}_{e,f}(\mathbf{n})$ are congruent to $0$ modulo $p$. On the other hand, if $\mathbf{a}/p$ is not in $\mathcal{D}_{e,f}$, then, for all $\mathbf{d}$ in $e$ or $f$, we have $\lfloor \mathbf{d}\cdot\mathbf{a}/p\rfloor=0$ so that \eqref{inter1} yields
$$
\mathcal{Q}_{e,f}(\mathbf{a}+\mathbf{n}p)\equiv\mathcal{Q}_{e,f}(\mathbf{a})\mathcal{Q}_{e,f}(\mathbf{n})\mod p\mathbb{Z}_p,
$$
as expected. This proves Assertion $(1)$ in Theorem \ref{FactoCrit}.
\medskip 

Now we prove Assertion $(2)$ in Theorem \ref{FactoCrit}. If $|e|\neq|f|$ then, since $\Delta_{e,f}$ is nonnegative on $[0,1]^d$, there exists $k$ in $\{1,\dots,d\}$ such that $|e|^{(k)}-|f|^{(k)}=\Delta_{e,f}(\mathbf{1}_k)\geq 1$. Thereby, for almost all primes $p$, we have 
$$
v_p\big(\mathcal{Q}_{e,f}(\mathbf{1}_k+\mathbf{1}_kp)\big)=\sum_{\ell=1}^\infty\Delta_{e,f}\left(\frac{\mathbf{1}_k+\mathbf{1}_kp}{p^\ell}\right)\geq\Delta_{e,f}\left(\frac{\mathbf{1}_k}{p}+\mathbf{1}_k\right)\geq 1,
$$
but $v_p\big(\mathcal{Q}_{e,f}(\mathbf{1}_k)\big)=0$ so that $\mathcal{Q}_{e,f}$ does not satisfy the $p$-Lucas property.
\medskip

Throughout the rest of this proof, we assume that $|e|=|f|$. According to Section $7.3.2$ in \cite{Delaygue3}, there exist $k$ in $\{1,\dots,d\}$ and a rational fraction $R(X)$ in $\mathbb{Q}(X)$, $R(X)\neq 1$, such that, for all large enough prime numbers $p$, we can choose $\mathbf{a}_p$ in $\{0,\dots,p-1\}^d$ satisfying $\mathcal{Q}_{e,f}(\mathbf{a}_p)\in\mathbb{Z}_p^\times$, and such that, for all natural integers $n$, we have (see \cite[$(7.10)$]{Delaygue3})
$$
\mathcal{Q}_{e,f}(\mathbf{a}_p+\mathbf{1}_knp)\in R(n)\mathcal{Q}_{e,f}(\mathbf{a}_p)\mathcal{Q}_{e,f}(\mathbf{1}_kn)(1+p\mathbb{Z}_p).
$$
We fix a natural integer $n$ satisfying $R(n)\neq 1$. For almost all primes $p$, the numbers $R(n)$, $\mathcal{Q}_{e,f}(\mathbf{1}_kn)$ and $\mathcal{Q}_{e,f}(\mathbf{a}_p)$ are invertible in $\mathbb{Z}_p$, and $R(n)\not\equiv 1\mod p\mathbb{Z}_p$. Thus, we obtain 
$$
\mathcal{Q}_{e,f}(\mathbf{a}_p+\mathbf{1}_knp)\not\equiv \mathcal{Q}_{e,f}(\mathbf{a}_p)\mathcal{Q}_{e,f}(\mathbf{1}_kn)\mod p\mathbb{Z}_p,
$$
which finishes the proof of Assertion $(2)$ in Theorem \ref{FactoCrit}.
\medskip

Now we assume that $|e|=|f|$ and that, for all $\mathbf{x}$ in $\mathcal{D}_{e,f}$, we have $\Delta_{e,f}(\mathbf{x})\geq 1$. Hence, for every prime $p$, we have 
$$
\mathcal{Z}_p(\mathcal{Q}_{e,f})=\big\{\mathbf{v}\in\{0,\dots,p-1\}^d\,:\,\mathbf{v}/p\in\mathcal{D}_{e,f}\big\}.
$$
Furthermore, if $\mathbf{v}/p$ belongs to $\mathcal{D}_{e,f}$, then, for all positive integers $N$ and all vectors $\mathbf{a}_0,\dots,\mathbf{a}_{N-1}$ in $\{0,\dots,p-1\}^d$, we have 
$$
\frac{\mathbf{v}}{p}\leq\left\{\frac{\mathbf{a}_0+\mathbf{a}_1p+\cdots+\mathbf{a}_{N-1}p^{N-1}+\mathbf{v}p^N}{p^{N+1}}\right\}\in\mathcal{D}_{e,f},
$$
so that, for every $\mathbf{n}$ in $\mathbb{N}^d$, $\mathbf{n}=\sum_{k=0}^\infty\mathbf{n}_kp^k$ with $\mathbf{n}_k\in\{0,\dots,p-1\}^d$, we have
$$
v_p\big(\mathcal{Q}_{e,f}(\mathbf{n})\big)=\sum_{\ell=1}^\infty\Delta_{e,f}\left(\left\{\frac{\sum_{k=0}^{\ell-1}\mathbf{n}_kp^k}{p^{\ell}}\right\}\right)\geq\alpha_p(\mathcal{Q}_{e,f},\mathbf{n}),
$$
and Theorem \ref{FactoCrit} is proved.

\section{Proofs of Theorem \ref{Laurent Ap} and Proposition \ref{propo Ap}}\label{section rec}

\subsection{Induction \textit{via} Ap\'ery-like recurrence relations}

In this section, we fix a prime $p$. If $A$ is a $\mathbb{Z}_p$-valued sequence, then, for all natural integers $r$, we write $\mathcal{U}_A(r)$ for the assertion ``For all $n,i\in\mathbb{N}$, $i\leq r$, if $\alpha_p(A,n)\geq i$, then $A(n)\in p^i\mathbb{Z}_p$''. As a first step, we shall prove the following result.

\begin{lemme}\label{lemme induction}
Let $A$ be a $\mathbb{Z}_p$-valued sequence satisfying the $p$-Lucas property with $A(0)$ in $\mathbb{Z}_p^\times$. Assume that $f_A$ is annihilated by a differential operator $\mathcal{L}\in\mathbb{Z}_p[z,\theta]$ such that at least one of the following conditions holds:
\begin{itemize}
\item $\mathcal{L}$ is of type I. 
\item $\mathcal{L}$ is of type II and $p-1\in\mathcal{Z}_p(A)$. 
\end{itemize}
Let $r$ be a natural integer such that $\mathcal{U}_A(r)$ holds. Then, for all $n_0$ in $\mathcal{Z}_p(A)$ and all natural integers $m$ satisfying $\alpha_p(A,m)\geq r$, we have
$$
A(n_0+mp)\in p^{r+1}\mathbb{Z}_p.
$$
\end{lemme}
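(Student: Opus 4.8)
The plan is to turn the relation $\mathcal L f_A=0$ into an Ap\'ery-like recurrence and then prove, by strong induction on $N=n_0+mp$, the refined Lucas congruence
\begin{equation}
A(v+mp)\equiv A(v)A(m)\pmod{p^{\,r+1}\mathbb Z_p},\qquad 0\le v\le p-1,\ \alpha_p(A,m)\ge r. \tag{$\star$}
\end{equation}
Writing $\mathcal L=\sum_{k=0}^q z^kP_k(\theta)$ and reading off the coefficient of $z^N$ gives, for every $N\ge0$,
\begin{equation}
\sum_{k=0}^q P_k(N-k)A(N-k)=0,\qquad A(j):=0\ \text{for}\ j<0. \tag{R}
\end{equation}
Granting $(\star)$, the Lemma follows at once: with $v=n_0\in\mathcal Z_p(A)$ one has $v_p(A(n_0))\ge1$, and $\mathcal U_A(r)$ together with $\alpha_p(A,m)\ge r$ gives $v_p(A(m))\ge r$, hence $v_p\bigl(A(n_0+mp)\bigr)\ge r+1$. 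I will also use two easy consequences of the $p$-Lucas property: taking $v=n=0$ in \eqref{FastRef} forces $A(0)\equiv1\pmod p$; and $g(z):=\sum_{v=0}^{p-1}A(v)z^v$ satisfies $\mathcal L(g)\equiv0\pmod{p\mathbb Z_p[z]}$, equivalently $\sum_{0\le N-k\le p-1}P_k(N-k)A(N-k)\equiv0\pmod p$ for every $N$. The base cases of the induction ($N<p$, so $m=0$ and perforce $r=0$) are exactly the $p$-Lucas congruence.

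For the inductive step with $v\neq0$: then $v\in\{1,\dots,p-1\}$ is a $p$-adic unit, so $p\nmid N$ and $P_0(N)\in\mathbb Z_p^\times$ by the type I/type II hypothesis; thus (R) gives
$$A(N)=-P_0(N)^{-1}\Bigl(\sum_{k=1}^{v}P_k(N-k)A(N-k)+\sum_{k=v+1}^{q}P_k(N-k)A(N-k)\Bigr).$$
In the second sum $N-k<mp$, and the type I factorization $P_k(X)\in\prod_{i=1}^{k-1}(X+i)^2\mathbb Z_p[X]$ makes $P_k(N-k)$ divisible by the square of the product of the multiples of $p$ among $N-1,\dots,N-(k-1)$, the first of which is $N-v=mp$; a short count of base-$p$ digits of $mp-j$, together with $v_p(A(n))\ge\min\{r,\alpha_p(A,n)\}$ from $\mathcal U_A(r)$, then shows each term of the second sum lies in $p^{r+1}\mathbb Z_p$. (In the type II case $q=2$, so $k=2$; the single factor $X+1$ in $P_2$ loses one power of $p$, which is recovered precisely because the hypothesis $p-1\in\mathcal Z_p(A)$ raises $\alpha_p(A,mp-1)$ by one.) In the first sum $N-k=(v-k)+mp$ with $v-k<v$, so the induction hypothesis applies and $A(N-k)\equiv A(v-k)A(m)\pmod{p^{r+1}}$; substituting this, replacing $P_k(N-k)$ by $P_k(v-k)$ modulo $p$ (the resulting error absorbed by $v_p(A(m))\ge r$), and using (R) at $N=v$ in the form $\sum_{k=1}^{v}P_k(v-k)A(v-k)=-P_0(v)A(v)$, one gets $A(N)\equiv P_0(N)^{-1}P_0(v)A(v)A(m)\pmod{p^{r+1}}$. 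Since $P_0(v)-P_0(N)$ is divisible by $N-v=mp$ and $v_p(A(m))\ge r$, the factor $P_0(N)^{-1}P_0(v)$ may be replaced by $1$, yielding $(\star)$ at $N$.

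The main obstacle is the remaining case $v=0$ of $(\star)$, that is the Frobenius-type supercongruence $A(mp)\equiv A(0)A(m)\pmod{p^{r+1}}$ when $\alpha_p(A,m)\ge r$ — which is genuinely needed, since the term $k=n_0$ (and, more generally, $k=v$) above produces $A(mp)$. Here (R) at $N=mp$ is \emph{degenerate}: the leading coefficient $P_0(mp)$ need not be a unit, so one cannot simply solve for $A(mp)$. This is exactly where the full strength of the hypotheses on $\mathcal L$ is spent: the squared factors $\prod_{i=1}^{k-1}(X+i)^2$ for \emph{every} $k\ge2$ (respectively, for type II, the condition $P_2\in(X+1)\mathbb Z_p[X]$ together with $p-1\in\mathcal Z_p(A)$, which kills the obstruction from the $z^2$-term at the critical digit) guarantee that when (R) at $N=mp$ is played against (R) at $N=m$ scaled by $A(0)$, and $(\star)$ is invoked at the smaller indices $mp-k=(p-k)+(m-1)p$, all the discrepancy terms land in $p^{r+1}\mathbb Z_p$. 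Managing this degenerate recurrence — and the accompanying bookkeeping of $p$-adic valuations and base-$p$ expansions — is the technical heart of the proof; everything else reduces to routine manipulation of (R).
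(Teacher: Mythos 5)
There is a genuine gap, and it sits exactly where you placed it yourself: the case $v=0$ of your claim $(\star)$, i.e.\ the supercongruence $A(mp)\equiv A(0)A(m)\pmod{p^{r+1}\mathbb{Z}_p}$. Your inductive step for $v\neq 0$ consumes this case (the term $k=v$ of your first sum is $P_v(mp)A(mp)$), but for it you offer only the assertion that playing the recurrence at $N=mp$ against the recurrence at $N=m$ makes all discrepancies fall into $p^{r+1}\mathbb{Z}_p$. That is not routine and, as described, does not work: the recurrence at $N=mp$ is degenerate since $P_0(mp)\equiv P_0(0)\bmod p$ need not be a unit (for the Ap\'ery operator $P_0(\theta)=\theta^3$, so $P_0(0)=0$); comparing $P_k(mp-k)$ with $P_k(m-k)$ only gives congruences modulo $p$, since their difference is divisible by $m(p-1)$ and not by any high power of $p$; and invoking $(\star)$ at $mp-k=(p-k)+(m-1)p$ would require $\alpha_p(A,m-1)\geq r$, while the hypothesis only guarantees $\alpha_p(A,m-1)\geq r-1$. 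More fundamentally, nothing in the hypotheses of the lemma (the $p$-Lucas property, $\mathcal{U}_A(r)$, and an annihilating operator of type I or II) identifies the constant of proportionality as $A(m)$: your $(\star)$ at $v=0$ is a Frobenius/Dwork-type statement strictly stronger than the lemma, and it is left unproved, so the whole induction is unsupported.

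The paper's proof avoids this obstacle altogether. Writing $A(v+mp)=\beta(v,m)p^r$ (possible by $\mathcal{U}_A(r)$), it shows that $(\beta(v,m))_{0\le v\le p-1}$ satisfies modulo $p$ the same linear recurrences as $(A(v))_{0\le v\le p-1}$: for $v\geq q$ this is the reduction of the recurrence at $v+mp$, and for $1\le v\le q-1$ one first proves the ``rescue'' estimate $P_k(v+mp-k)A(v+mp-k)\in p^{r+1}\mathbb{Z}_p$ for $k>v$, which is precisely where type I (squared factors) or type II together with $p-1\in\mathcal{Z}_p(A)$ enter — essentially your treatment of the second sum. Since the leading coefficients $P_0(v)$ are units for $1\le v\le p-1$ and $A(0)\in\mathbb{Z}_p^\times$, both solutions are determined mod $p$ by their value at $v=0$, whence $\beta(v,m)\equiv A(v)\gamma(m)\pmod p$ for some scalar $\gamma(m)$ that is never computed; taking $v=n_0\in\mathcal{Z}_p(A)$, the factor $A(n_0)\equiv 0\pmod p$ kills the unknown scalar and gives $A(n_0+mp)\in p^{r+1}\mathbb{Z}_p$. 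Your argument can be repaired along these lines by weakening the target of $(\star)$: prove instead $A(v+mp)\equiv A(v)\,A(mp)A(0)^{-1}\pmod{p^{r+1}\mathbb{Z}_p}$ by induction on $v$ (the case $v=0$ is then a tautology, and your manipulations for $v\geq 1$ go through verbatim); since $A(mp)\in p^r\mathbb{Z}_p$ by $\mathcal{U}_A(r)$ and $A(n_0)\in p\mathbb{Z}_p$, the lemma follows without ever knowing $A(mp)$ modulo $p^{r+1}$.
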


\begin{proof}
Since $A$ satisfies the $p$-Lucas property, we can assume that $r$ is nonzero. The series $f_A(z)$ is annihilated by a differential operator $\mathcal{L}=P_0(\theta)+zP_1(\theta)+\cdots+z^qP_q(\theta)$ with $P_k(X)$ in $\mathbb{Z}_p[X]$ and $P_0(\mathbb{Z}_p^\times)\subset\mathbb{Z}_p^\times$. Thus, for every natural integer $n$, we have
\begin{equation}\label{eq rec}
\sum_{k=0}^qP_k(n-k)A(n-k)=0.
\end{equation}

We fix a natural integer $m$ satisfying $\alpha_p(A,m)\geq r$. In particular, since $r$ is nonzero and $A(0)$ is invertible in $\mathbb{Z}_p$, we have $m\geq 1$. Furthermore, for all $v$ in $\{0,\dots,p-1\}$, we also have $\alpha_p(A,v+mp)\geq r$. According to $\mathcal{U}_A(r)$, we obtain that, for all $v$ in $\{0,\dots,p-1\}$, $A(v+mp)$ belongs to $p^{r}\mathbb{Z}_p$ so that $A(v+mp)=:\beta(v,m)p^{r}$, with $\beta(v,m)\in\mathbb{Z}_p$. 

By \eqref{eq rec}, for all $v$ in $\{q,\dots,p-1\}$, we have
\begin{align*}
0=\sum_{k=0}^qP_k(v-k+mp)A(v-k+mp)&=p^{r}\sum_{k=0}^qP_k(v-k+mp)\beta(v-k,m)\\
&\equiv p^{r}\sum_{k=0}^qP_k(v-k)\beta(v-k,m)\mod p^{r+1}\mathbb{Z}_p,
\end{align*}
because, for all polynomials $P$ in $\mathbb{Z}_p[X]$ and all integers $a$ and $c$, we have $P(a+cp)\equiv P(a)\mod p\mathbb{Z}_p$.
Thus, for all $v$ in $\{q,\dots,p-1\}$, we obtain
\begin{equation}\label{rec mod p}
\sum_{k=0}^qP_k(v-k)\beta(v-k,m)\equiv 0\mod p\mathbb{Z}_p.
\end{equation}

We claim that if $v$ is in $\{1,\dots,q-1\}$, then, for all $k$ in $\{v+1,\dots,q\}$, we have 
\begin{equation}\label{rescue}
P_k(v+mp-k)A(v+mp-k)\in p^{r+1}\mathbb{Z}_p.
\end{equation}

Indeed, on the one hand, if $\mathcal{L}$ is of type II, then we have $q=2$ and $P_2(X)$ belongs to $(X+1)\mathbb{Z}_p[X]$ which yields
$$
P_2(-1+mp)A(-1+mp)\in pA\big(p-1+(m-1)p\big)\mathbb{Z}_p.
$$
Since $0$ is not in $\mathcal{Z}_p(A)$, we have $\alpha_p(A,m-1)\geq r-1$ which, together with $p-1\in\mathcal{Z}_p(A)$, leads to 
$$
\alpha_p\big(A,p-1+(m-1)p\big)\geq r. 
$$
According to $\mathcal{U}_A(r)$, we obtain that $pA\big(p-1+(m-1)p\big)$ is in $p^{r+1}\mathbb{Z}_p$, as expected. On the other hand, if $\mathcal{L}$ is of type I, then for all $v$ in $\{1,\dots,q-1\}$ and all $k$ in $\{v+1,\dots,q\}$, we have
$$
v_p\big(P_k(v+mp-k)\big)\geq v_p\left(\prod_{i=1}^{k-1}(v+mp-k+i)^2\right).
$$
Writing $k-v=a+bp$ with $a$ in $\{0,\dots,p-1\}$ and $b$ in $\mathbb{N}$, we obtain $k-1\geq a+bp$ so that
$$
v_p\left(\prod_{i=1}^{k-1}(mp+i-a-bp)\right)\geq\begin{cases}b & \textup{if $a=0$;}\\ b+1 & \textup{if $a\geq 1$.}\end{cases}.
$$
Thus, it is enough to prove that 
\begin{equation}\label{Descente}
A(v+mp-k)\in \begin{cases} p^{r+1-2b}\mathbb{Z}_p & \textup{if $a=0$;}\\ p^{r-1-2b}\mathbb{Z}_p & \textup{if $a\geq 1$}.\end{cases}.
\end{equation}
We have $v+mp-k=-a+(m-b)p$. If $-a+(m-b)p$ is negative, then $A(v+mp-k)=0$ and \eqref{Descente} holds. If $m-b$ is nonnegative, then we have $\alpha_p(A,m-b)\geq r-b$. Thus, we have either $a=0$ and $\alpha_p(A,v+mp-k)\geq r-b$, or $a,m-b\geq 1$ and 
$$
\alpha_p(A,v+mp-k)=\alpha_p\big(A,p-a+(m-b-1)p\big)\geq r-b-1.
$$
Hence Assertion $\mathcal{U}_A(r)$ yields 
$$
A(v+mp-k)\in\begin{cases} p^{r-b}\mathbb{Z}_p & \textup{if $a=0$;}\\ p^{r-1-b}\mathbb{Z}_p & \textup{if $a\geq 1$.}\end{cases}.
$$
If $a=0$, then $b\geq 1$ so that \eqref{Descente} holds and \eqref{rescue} is proved. 
\medskip

By \eqref{rescue}, for all natural integers $v$ satisfying $1\leq v\leq\min(q-1,p-1)$, we have
\begin{align*}
0&=\sum_{k=0}^qP_k(v-k+mp)A(v-k+mp)\\
&\equiv\sum_{k=0}^vP_k(v-k+mp)A(v-k+mp)\mod p^{r+1}\mathbb{Z}_p\\
&\equiv p^r\sum_{k=0}^vP_k(v-k+mp)\beta(v-k,m)\mod p^{r+1}\mathbb{Z}_p\\
&\equiv p^r\sum_{k=0}^vP_k(v-k)\beta(v-k,m)\mod p^{r+1}\mathbb{Z}_p.
\end{align*}
Thus, for all natural integers $v$ satisfying $1\leq v\leq\min(q-1,p-1)$, we have
\begin{equation}\label{rec mod p 0}
\sum_{k=0}^vP_k(v-k)\beta(v-k,m)\equiv 0\mod p\mathbb{Z}_p.
\end{equation}
\medskip 

Both sequences $(\beta(v,m))_{0\leq v\leq p-1}$ and $(A(v))_{0\leq v\leq p-1}$ satisfy Equations \eqref{rec mod p} and \eqref{rec mod p 0}. Furthermore, for all $v$ in $\{1,\dots,p-1\}$, $P_0(v)$ and $A(0)$ are invertible in $\mathbb{Z}_p$. Hence there exists $\gamma(m)$ in $\{0,\dots,p-1\}$ such that, for all $v$ in $\{0,\dots,p-1\}$, we have $\beta(v,m)\equiv A(v)\gamma(m)\mod p\mathbb{Z}_p$ so that
$$
A(v+mp)\equiv A(v)\gamma(m)p^{r}\mod p^{r+1}\mathbb{Z}_p.
$$
Since $n_0$ is in $\mathcal{Z}_p(A)$, we obtain that $A(n_0+mp)$ belongs to $p^{r+1}\mathbb{Z}_p$ and Lemma \ref{lemme induction} is proved.
\end{proof}

\subsection{Proof of Theorem \ref{Laurent Ap}}

Let $p$ be a fixed prime number. For every positive integer $n$, we set $\ell(n):=\lfloor\log_p(n)\rfloor+1$ the length of the expansion of $n$ to the base $p$, and $\ell(0):=1$. For all natural integers $n_1,\dots,n_r$, we set
$$
n_1\ast\cdots\ast n_r:=n_1+n_2p^{\ell(n_1)}+\cdots+n_r p^{\ell(n_1)+\cdots+\ell(n_{r-1})},
$$
so that the expansion of $n_1\ast\cdots\ast n_r$ to the base $p$ is the concatenation of the respective expansions of $n_1,\dots,n_r$. Then, by a result of Mellit and Vlasenko \cite[Lemma $1$]{Masha}, there exists a $\mathbb{Z}_p$-valued sequence $(c_n)_{n\geq 0}$ such that, for all positive integers $n$, we have
\begin{equation}\label{Decomp}
A(n)=\underset{1\leq r\leq \ell(n),\;n_r> 0}{\sum_{n_1\ast\cdots\ast n_r=n}}c_{n_1}\cdots c_{n_r}\quad\textup{and}\quad c_n\equiv 0\mod p^{\ell(n)-1}\mathbb{Z}_p. 
\end{equation}

For every natural integer $r$, we write $\mathcal{U}(r)$ for the assertion: ``For all $n,i\in\mathbb{N}$, $i\leq r$, if $\alpha_p(A,n)\geq i$, then $A(n),c_n\in p^i\mathbb{Z}_p$''. To prove Theorem \ref{Laurent Ap}, it suffices to show that, for all natural integers $r$, Assertion $\mathcal{U}(r)$ holds. 
\medskip

First we prove $\mathcal{U}(1)$. By Theorem $1$ in \cite{Masha}, $A$ satisfies the $p$-Lucas property. In addition, if $v$ is in $\mathcal{Z}_p(A)$, then $v$ is nonzero because $A(0)=1$, and by \eqref{Decomp} we have $c_v=A(v)\in p\mathbb{Z}_p$. Now, if a natural integer $n$ satisfies $\ell(n)=2$ and $\alpha_p(A,n)\geq 1$, then Equation~\eqref{Decomp} yields $A(n)\equiv c_n\mod p\mathbb{Z}_p$, so that $c_n$ is in $p\mathbb{Z}_p$. Hence, by induction on $\ell(n)$, we obtain that, for all natural integers $n$ satisfying $\alpha_p(A,n)\geq 1$, $c_n$ belongs to $p\mathbb{Z}_p$, so that $\mathcal{U}(1)$ holds.
\medskip

Let $r$ be a positive integer such that $\mathcal{U}(r)$ holds. We shall prove that $\mathcal{U}(r+1)$ is true. For all positive integers $M$, we write $\mathcal{U}_M(r+1)$ for the assertion: 
\begin{quote}
``For all $n,i\in\mathbb{N}$, $n\leq M$, $i\leq r+1$, if $\alpha_p(A,n)\geq i$, then $A(n),c_n\in p^{i}\mathbb{Z}_p$''. 
\end{quote}
Hence $\mathcal{U}_M(r+1)$ is true if $\ell(M)\leq r$. Let $M$ be a positive integer such that $\mathcal{U}_M(r+1)$ holds. We shall prove $\mathcal{U}_{M+1}(r+1)$. By Assertions $\mathcal{U}(r)$ and $\mathcal{U}_M(r+1)$, it suffices to prove that if $\alpha_p(A,M+1)$ is greater than $r$, then $A(M+1)$ and $c_{M+1}$ belong to $p^{r+1}\mathbb{Z}_p$. In the rest of the proof, we assume that $\alpha_p(A,M+1)$ is greater than $r$. 
\medskip

If $u$ and $n_1,\dots,n_u$ are natural integers satisfying $2\leq u\leq\ell(M+1)$ and $n_1\ast\cdots\ast n_u=M+1$ with $n_u>0$, then, for all $i$ in $\{1,\dots,u\}$, we have $n_i\leq M$ and 
$$
\alpha_p(A,n_1)+\cdots+\alpha_p(A,n_u)=\alpha_p(A,M+1)\geq r+1. 
$$
Then there exist $1\leq a_1<\cdots<a_k\leq u$ and $1\leq i_1,\dots,i_k\leq r+1$ such that $\alpha_p(A,n_{a_j})\geq i_j$ and $i_1+\cdots+i_k\geq r+1$. Thereby, Assertion $\mathcal{U}_M(r+1)$ yields $c_{n_1}\cdots c_{n_u}\in p^{r+1}\mathbb{Z}_p$, so that
$$
\underset{2\leq u\leq\ell(M+1),\;n_u> 0}{\sum_{n_1\ast\cdots\ast n_u=M+1}}c_{n_1}\cdots c_{n_u}\in p^{r+1}\mathbb{Z}_p.
$$
By \eqref{Decomp}, we obtain
$$
A(M+1)\equiv c_{M+1}\mod p^{r+1}\mathbb{Z}_p\quad\textup{and}\quad c_{M+1}\equiv 0\mod p^{\ell(M+1)-1}\mathbb{Z}_p.
$$
Hence it suffices to consider the case $\ell(M+1)=r+1$. In particular, we have $M+1=v+mp$ where $v$ is in $\mathcal{Z}_p(A)$ and $m$ is a natural integer satisfying $\alpha_p(A,m)=r$. Since $\mathcal{U}(r)$ holds, Lemma \ref{lemme induction} yields $A(M+1)\in p^{r+1}\mathbb{Z}_p$. Thus we also have $c_{M+1}\in p^{r+1}\mathbb{Z}_p$ and Assertion $\mathcal{U}_{M+1}(r+1)$ holds. This finishes the proof of $\mathcal{U}(r+1)$ so that of Theorem \ref{Laurent Ap}.
$\hfill\square$

\subsection{Proof of Proposition \ref{propo Ap}}

Let $p$ be a prime and $A$ a $\mathbb{Z}_p$-valued sequence satisfying hypothesis of Proposition \ref{propo Ap}. For every natural integer $n$, we write $\alpha(n)$, respectively $\mathcal{Z}$, as a shorthand for $\alpha_p(A,n)$, respectively for $\mathcal{Z}_p(A)$. For every natural integer $r$, we define Assertions
$$
\mathcal{U}(r):\textup{ ``For all $n,i\in\mathbb{N}$, $i\leq r$, if $\alpha(n)\geq i$, then $A(n)\in p^{i}\mathbb{Z}_p$.''},
$$
and
$$
\mathcal{V}(r):\textup{ ``For all $n,i\in\mathbb{N}$, $i\leq r$, and all $B\in\mathfrak{B}$, if $\alpha(n)\geq i$, then $B(n)\in p^{i-1}\mathbb{Z}_p$''}.
$$

To prove Proposition \ref{propo Ap}, we have to show that, for all natural integers $r$, Assertions $\mathcal{U}(r)$ and $\mathcal{V}(r)$ are true. We shall prove those assertions by induction on $r$.
\medskip

Observe that Assertions $\mathcal{U}(0)$, $\mathcal{V}(0)$ and $\mathcal{V}(1)$ are trivial. Furthermore, since $A$ satisfies the $p$-Lucas property, Assertion $\mathcal{U}(1)$ holds. Let $r_0$ be a fixed positive integer, $r_0\geq 2$, such that Assertions $\mathcal{U}(r_0-1)$ and $\mathcal{V}(r_0-1)$ are true. First, we prove Assertion $\mathcal{V}(r_0)$.
\medskip

Let $B$ in $\mathfrak{B}$ and $m$ in $\mathbb{N}$ be such that $\alpha(m)\geq r_0$. We write $m=v+np$ with $v$ in $\{0,\dots,p-1\}$. Since $r_0\geq 2$ and $0$ does not belong to $\mathcal{Z}$, we have $n\geq 1$ and, by Assertion~$(a)$ in Proposition~\ref{propo Ap}, there exist $A'$ in $\mathfrak{A}$ and a sequence $(B_k)_{k\geq 0}$, with $B_k$ in $\mathfrak{B}$, such that
\begin{equation}\label{Transf1} 
B(v+np)=A'(n)+\sum_{k=0}^\infty p^{k+1}B_k(n-k).
\end{equation}
In addition, we have $\alpha(n)\geq r_0-1$ and, since $0$ is not in $\mathcal{Z}$, we have $\alpha(n-1)\geq r_0-2$. By induction, for all natural integers $k$ satisfying $k\leq n$, we have $\alpha(n-k)\geq r_0-1-k$. Thus, by \eqref{Transf1} in combination with $\mathcal{U}(r_0-1)$ and $\mathcal{V}(r_0-1)$, we obtain 
$$
A'(n)\in p^{r_0-1}\mathbb{Z}\quad\textup{and}\quad p^{k+1}B_k(n-k)\in p^{k+1+r_0-2-k}\mathbb{Z}_p\subset p^{r_0-1}\mathbb{Z}_p,
$$
so that $B(v+np)$ belongs to $p^{r_0-1}\mathbb{Z}_p$ and $\mathcal{V}(r_0)$ is true.
\medskip

Now we prove Assertion $\mathcal{U}(r_0)$. We write $\mathcal{U}_N(r_0)$ for the assertion: 
\begin{quote}
``For all $n,i\in\mathbb{N}$, $n\leq N$, $i\leq r_0$, if $\alpha(n)\geq i$, then $A(n)\in p^{i}\mathbb{Z}_p$''. 
\end{quote}
We shall prove $\mathcal{U}_N(r_0)$ by induction on $N$. Assertion $\mathcal{U}_1(r_0)$ holds. Let $N$ be a positive integer such that $\mathcal{U}_N(r_0)$ is true. Let $n:=n_0+mp\leq N+1$ with $n_0$ in $\{0,\dots,p-1\}$ and $m$ in $\mathbb{N}$. We can assume that $\alpha(n)\geq r_0$. 

If $n_0$ is in $\mathcal{Z}$, then we have $\alpha(m)\geq r_0-1$ and, by Lemma \ref{lemme induction}, we obtain that $A(n)$ belongs to $p^{r_0}\mathbb{Z}_p$ as expected. If $n_0$ is not in $\mathcal{Z}$, then we have $\alpha(m)\geq r_0$. By Assertion $(a)$ in Proposition \ref{propo Ap}, there exist $A'$ in $\mathfrak{A}$ and a sequence $(B_k)_{k\geq 0}$ with $B_k$ in $\mathfrak{B}$ such that 
$$
A(n)=A'(m)+\sum_{k=0}^\infty p^{k+1}B_k(m-k). 
$$
We have $m\leq N$, $\alpha(m)\geq r_0$ and $\alpha(m-k)\geq r_0-k$, hence, by Assertions $\mathcal{U}_{N}(r_0)$ and $\mathcal{V}(r_0)$, we obtain that $A(n)$ belongs to $p^{r_0}\mathbb{Z}_p$. This finishes the induction on $N$ and proves $\mathcal{U}(r_0)$. Therefore, by induction on $r_0$, Proposition \ref{propo Ap} is proved.
$\hfill\square$

\section{Proof of Theorem \ref{theo gene}}\label{section main proof}

To prove Theorem \ref{theo gene}, we shall apply Proposition \ref{propo Ap} to $\mathfrak{S}_{e,f}$. As a first step, we prove that this sequence satisfies the $p$-Lucas property.

\begin{proof}[Proof of Proposition \ref{propo Lucas}]

For all $\mathbf{x}$ in $[0,1]^d$, we have $\Delta_{e,f}(\mathbf{x})=\Delta_{e,f}(\{\mathbf{x}\})\geq 0$ so that, by Landau's criterion, $\mathcal{Q}_{e,f}$ is integer-valued. Let $p$ be a fixed prime, $v$ in $\{0,\dots,p-1\}$ and $n$ a natural integer. We have
$$
\mathfrak{S}_{e,f}(v+np)=\underset{k_i\in\mathbb{N}}{\sum_{k_1+\cdots+k_d=v+np}}\mathcal{Q}_{e,f}(k_1,\dots,k_d).
$$
Write $k_i=a_i+m_ip$ with $a_i$ in $\{0,\dots,p-1\}$ and $m_i$ in $\mathbb{N}$. If $a_1+\cdots+a_d\neq v$, then we have $a_1+\cdots+a_d\geq p$ and there exists $i$ in $\{1,\dots,d\}$ such that $a_i\geq p/d$. Since $e$ is $1$-admissible, $(a_1,\dots,a_d)/p$ belongs to $\mathcal{D}_{e,f}$ so that $\Delta_{e,f}\big((a_1,\dots,a_p)/p\big)\geq 1$ and $\mathcal{Q}_{e,f}(k_1,\dots,k_d)$ is in $p\mathbb{Z}_p$. In addition, by Theorem \ref{FactoCrit}, $\mathcal{Q}_{e,f}$ satisfies the $p$-Lucas property for all primes $p$. Hence we obtain
\begin{align*}
\mathfrak{S}_{e,f}(v+np)&\equiv\underset{0\leq a_i\leq p-1}{\sum_{a_1+\cdots+a_d=v}}\;\underset{m_i\in\mathbb{N}}{\sum_{m_1+\cdots+m_d=n}}\mathcal{Q}_{e,f}(a_1+m_1p,\dots,a_d+m_dp)\mod p\mathbb{Z}_p\\
&\equiv \underset{0\leq a_i\leq p-1}{\sum_{a_1+\cdots+a_d=v}}\;\underset{m_i\in\mathbb{N}}{\sum_{m_1+\cdots+m_d=n}}\mathcal{Q}_{e,f}(a_1,\dots,a_d)\mathcal{Q}_{e,f}(m_1,\dots,m_d)\mod p\mathbb{Z}_p\\
&\equiv \mathfrak{S}_{e,f}(v)\mathfrak{S}_{e,f}(n)\mod p\mathbb{Z}_p.
\end{align*}
This finishes the proof of Proposition \ref{propo Lucas}.
\end{proof}

If $e$ is $2$-admissible then $e$ is also $1$-admissible. Furthermore, if $f=(\mathbf{1}_{k_1},\dots,\mathbf{1}_{k_v})$, then, for all $\mathbf{x}$ in $\mathcal{D}_{e,f}$, we have
$$
\Delta_{e,f}(\mathbf{x})=\sum_{i=1}^u\lfloor\mathbf{e}_i\cdot\mathbf{x}\rfloor\geq 1.
$$ 
Hence, if $e$ and $f$ satisfy the conditions of Theorem \ref{theo gene}, then Proposition \ref{propo Lucas} implies that, for all primes $p$, $\mathfrak{S}_{e,f}$ has the $p$-Lucas property and $\mathfrak{S}_{e,f}(0)=1$ is invertible in $\mathbb{Z}_p$. Thereby, to prove Theorem \ref{theo gene}, it remains to prove that $\mathfrak{S}_{e,f}$ satisfies Condition $(a)$ in Proposition \ref{propo Ap} with
$$
\mathfrak{B}=\big\{\mathfrak{S}_{e,f}^g\,:\,g\in\mathfrak{F}_p^d\big\}.
$$
First we prove that some special functions belong to $\mathfrak{F}_p^1$.

\subsection{Special functions in $\mathfrak{F}_p^1$}

For all primes $p$, we write $|\cdot|_p$ for the ultrametric norm on $\mathbb{Q}_p$ (the field of $p$-adic numbers) defined by $|a|_p:=p^{-v_p(a)}$. Note that $(\mathbb{Z}_p,|\cdot|_p)$ is a compact space. Furthermore, if $(c_n)_{n\geq 0}$ is a $\mathbb{Z}_p$-valued sequence, then $\sum_{n=0}^{\infty}c_n$ is convergent in $(\mathbb{Z}_p,|\cdot|_p)$ if and only if $|c_n|_p$ tends to $0$ as $n$ tends to infinity. In addition, if $\sum_{n=0}^{\infty}c_n$ converges, then $(c_n)_{n\in\mathbb{N}}$ is a summable family in $(\mathbb{Z}_p,|\cdot|_p)$.
\medskip

In the rest of the article, for all primes $p$ and all positive integers $k$, we set $\Psi_{p,k,0}(0)=1$, $\Psi_{p,k,i}(0)=0$ for $i\geq 1$ and, for all natural integers $i$ and $m$, $m\geq 1$, we set 
$$
\Psi_{p,k,i}(m):=(-1)^i\sigma_{m,i}\left(\frac{1}{k},\frac{1}{k+p},\dots,\frac{1}{k+(m-1)p}\right), 
$$
where $\sigma_{m,i}$ is the $i$-th elementary symmetric polynomial of $m$ variables. Let us remind to the reader that, for all natural integers $m$ and $i$ satisfying $i>m\geq 1$, we have $\sigma_{m,i}=0$. 

The aim of this section is to prove that, for all primes $p$, all $k$ in $\{1,\dots,p-1\}$ and all natural integers $i$, we have
\begin{equation}\label{fou}
i!\Psi_{p,k,i}\in\mathfrak{F}_p^1.
\end{equation}

\begin{proof}[Proof of \eqref{fou}]
Throughout this proof, we fix a prime number $p$ and an integer $k$ in $\{1,\dots,p-1\}$. Furthermore, for all nonnegative integers $i$, we use $\Psi_i$ as a shorthand for $\Psi_{p,k,i}$ and $\mathbb{N}_{\geq i}$ as a shorthand for the set of integers larger than or equal to $i$. We shall prove \eqref{fou} by induction on $i$. To that end, for all natural integers $i$, we write $\mathcal{A}_i$ for the following assertion: 
\begin{quote}
``There exists a sequence $(T_{i,r})_{r\geq 0}$ of polynomial functions with coefficients in $\mathbb{Z}_p$ which converges uniformly to $i!\Psi_i$ on $\mathbb{N}$''.
\end{quote}

First, observe that, for all natural integers $m$, we have $\Psi_0(m)=1$, so that Assertion $\mathcal{A}_0$ is true. Let $i$ be a fixed positive integer such that assertions $\mathcal{A}_0,\dots,\mathcal{A}_{i-1}$ are true. According to the Newton-Girard formulas, for all integers $m\geq i$, we have
$$
i(-1)^i\sigma_{m,i}(X_1,\dots,X_m)=-\sum_{t=1}^i(-1)^{i-t}\sigma_{m,i-t}(X_1,\dots,X_m)\Lambda_t(X_1,\dots,X_m),
$$
where $\Lambda_t(X_1,\dots,X_m):=X_1^t+\cdots+X_m^t$. Thereby, for all integers $m\geq i$, we have
\begin{equation}\label{remind}
i\Psi_{i}(m)=-\sum_{t=1}^i\Psi_{i-t}(m)\Lambda_t\left(\frac{1}{k},\dots,\frac{1}{k+(m-1)p}\right).
\end{equation}

For all natural integers $j$ and $t$, we have
\begin{equation}\label{conv1}
\frac{1}{(k+jp)^{t}}=\frac{1}{k^{t}}\frac{1}{(1+\frac{j}{k}p)^{t}}=\frac{1}{k^{t}}+\sum_{s=1}^{\infty}\frac{(-1)^s}{k^{t}}\binom{t-1+s}{s}\left(\frac{j}{k}\right)^sp^s,
\end{equation}
where the right hand side of \eqref{conv1} is a convergent series in $(\mathbb{Z}_p,|\cdot|_p)$ because $k$ is invertible in $\mathbb{Z}_p$. Therefore, we obtain that
\begin{align}
\Lambda_t\left(\frac{1}{k},\dots,\frac{1}{k+(m-1)p}\right)
&=\frac{m}{k^t}+\sum_{j=0}^{m-1}\sum_{s=1}^{\infty}\frac{(-1)^s}{k^t}\binom{t-1+s}{s}\left(\frac{j}{k}\right)^sp^s\notag\\
&=\frac{m}{k^t}+\sum_{s=1}^{\infty}\frac{(-1)^s}{k^{t+s}}\binom{t-1+s}{s}p^s\left(\sum_{j=0}^{m-1}j^s\right).\label{ecrit1}
\end{align}
According to Faulhaber's formula, for all positive integers $s$, we have
$$
p^s\sum_{j=0}^{m-1}j^s=\sum_{c=1}^{s+1}(-1)^{s+1-c}\binom{s+1}{c}p^s\frac{B_{s+1-c}}{s+1}(m-1)^{c},
$$
where $B_k$ is the $k$-th first Bernoulli number. For all positive integers $s$ and $t$, we set $R_{0,t}(X):=X/k^t$ and
$$
R_{s,t}(X):=\frac{1}{k^{t+s}}\binom{t-1+s}{s}\sum_{c=1}^{s+1}(-1)^{1-c}\binom{s+1}{c}p^s\frac{B_{s+1-c}}{s+1}(X-1)^c,
$$
so that
$$
\Lambda_t\left(\frac{1}{k},\dots,\frac{1}{k+(m-1)p}\right)=\sum_{s=0}^\infty R_{s,t}(m).
$$
In the rest of this article, for all polynomials $P(X)=\sum_{n=0}^Na_nX^n$ in $\mathbb{Z}_p[X]$, we set
$$
\|P\|_p:=\max\big\{|a_n|_p\,:\,0\leq n\leq N\big\}.
$$
We claim that, for all natural integers $s$ and $t$, $t\geq 1$, we have
\begin{equation}\label{plus}
R_{s,t}(X)\in\mathbb{Z}_p[X],\quad\|R_{s,t}\|_p\underset{s\rightarrow\infty}{\longrightarrow}0\quad\textup{and}\quad R_{s,t}(0)=0.
\end{equation}
Indeed, on the one hand, if $p=2$ and $s=1$, then we have
$$
R_{1,t}(X)=\frac{-t}{k^{t+1}}\big(X-1+(X-1)^2\big)\in X\mathbb{Z}_2[X].
$$
On the other hand, if $p\geq 3$ or $s\geq 2$, then we have $p^s>s+1$ so that $v_p(s+1)\leq s-1$. Furthermore, according to the von Staudt-Clausen theorem, we have $v_p(B_{s+1-c})\geq -1$. Thus, the coefficients of $R_{s,t}(X)$ belong to $\mathbb{Z}_p$. To be more precise, we have $v_p(s+1)\leq\log_p(s+1)$, so that $\|R_{s,t}\|_p\underset{s\rightarrow\infty}{\longrightarrow}0$ as expected. In addition, we have
\begin{align*}
R_{s,t}(0)
&=-\frac{p^s}{(s+1)k^{t+s}}\binom{t-1+s}{s}\sum_{c=1}^{s+1}\binom{s+1}{c}B_{s+1-c}\\
&=-\frac{p^s}{(s+1)k^{t+s}}\binom{t-1+s}{s}\sum_{d=0}^{s}\binom{s+1}{d}B_d=0,
\end{align*}
where we used the well known relation satisfied by the Bernoulli numbers 
$$
\sum_{d=0}^{s}\binom{s+1}{d}B_d=0,\quad(s\geq 1).
$$
\medskip

According to $\mathcal{A}_0,\dots,\mathcal{A}_{i-1}$, for all $j$ in $\{0,\dots,i-1\}$, there exists a sequence $(T_{j,r})_{r\geq 0}$ of polynomial functions with coefficients in $\mathbb{Z}_p$ which converges uniformly to $j!\Psi_j$ on $\mathbb{N}$. 
According to \eqref{remind} and \eqref{plus}, for all natural integers $N$, there exists $S_N$ in $\mathbb{N}$ such that, for all $r\geq S_N$ and all $m\geq i$, we have
$$
i!\Psi_i(m)\equiv-\sum_{t=1}^i\frac{(i-1)!}{(i-t)!}T_{i-t,r}(m)\sum_{s=0}^rR_{s,t}(m)\mod p^N\mathbb{Z}_p.
$$
Thus, the sequence $(T_{i,r})_{r\geq 0}$ of polynomial functions with coefficients in $\mathbb{Z}_p$, defined by
\begin{equation}\label{defT}
T_{i,r}(x):=-\sum_{t=1}^i\frac{(i-1)!}{(i-t)!}T_{i-t,r}(x)\sum_{s=0}^rR_{s,t}(x),\quad(x,r\in\mathbb{N}),
\end{equation}
converges uniformly to $i!\Psi_i$ on $\mathbb{N}_{\geq i}$. To prove $\mathcal{A}_i$, it suffices to show that, for all $m$ in $\{0,\dots,i-1\}$, we have 
\begin{equation}\label{tends}
T_{i,r}(m)\underset{r\rightarrow\infty}{\longrightarrow}0.
\end{equation}

Observe that Equations \eqref{defT} and \eqref{plus} lead to $T_{i,r}(0)=0$. In particular, if $i=1$, then \eqref{tends} holds. Now we assume that $i\geq 2$. For all $m\geq 2$, we have 
\begin{align*}
\sum_{j=0}^{m}\Psi_{j}(m)X^j
&=\prod_{w=0}^{m-1}\left(1-\frac{X}{k+wp}\right)\\
&=\left(1-\frac{X}{k+(m-1)p}\right)\prod_{w=0}^{m-2}\left(1-\frac{X}{k+wp}\right)\\
&=\left(1-\frac{X}{k+(m-1)p}\right)\sum_{j=0}^{m-1}\Psi_{j}(m-1)X^j.
\end{align*}
Thereby, for all $j$ in $\{1,\dots,m-1\}$, we obtain that
$$
\Psi_{j}(m)=\Psi_{j}(m-1)-\frac{\Psi_{j-1}(m-1)}{k+(m-1)p},
$$
with
$$
\frac{1}{k+(m-1)p}=\sum_{s=0}^\infty\frac{(-1)^s}{k^{s+1}}p^s(m-1)^s.
$$
Thus, there exists a sequence $(U_r)_{r\geq 0}$ of polynomials with coefficients in $\mathbb{Z}_p$ such that, for all positive integers $N$, there exits a natural integer $S_N$ such that, for all $r\geq S_N$ and all $m\geq i+1$, we have
\begin{equation}\label{cong1}
T_{i,r}(m)\equiv T_{i,r}(m-1)-T_{i-1,r}(m-1)U_r(m-1)\mod p^N\mathbb{Z}_p.
\end{equation}

But, if $V_1(X)$ and $V_2(X)$ are polynomials with coefficients in $\mathbb{Z}_p$ and if there exists a natural integer $a$ such that, for all $m\geq a$, we have $V_1(m)\equiv V_2(m)\mod p^N\mathbb{Z}_p$, then, for all integers $n$, we have $V_1(n)\equiv V_2(n)\mod p^N\mathbb{Z}_p$. Indeed, let $n$ be an integer, there exists a natural integer $v$ such that $n+vp^N\geq a$. Thus, we obtain that 
$$
V_1(n)\equiv V_1(n+vp^N)\equiv V_2(n+vp^N)\equiv V_2(n)\mod p^N\mathbb{Z}_p. 
$$
In particular, Equation~\eqref{cong1} also holds for all positive integers $m$.

Furthermore, according to $\mathcal{A}_{i-1}$, for all $m$ in $\{0,\dots,i-2\}$, $T_{i-1,r}(m)$ tends to zero as $r$ tends to infinity. Thus, for all positive integers $N$, there exists a natural integer $S_N$ such that, for all $r\geq S_N$ and all $m$ in $\{1,\dots,i-1\}$, we have
$$
T_{i,r}(m)\equiv T_{i,r}(m-1)\mod p^N\mathbb{Z}_p.
$$
Since $T_{i,r}(0)=0$, we obtain that $T_{i,r}(m)\equiv 0\mod p^N\mathbb{Z}_p$ for all $m$ in $\{0,\dots,i-1\}$, so that \eqref{tends} holds. This finishes the induction on $i$ and proves \eqref{fou}. 
\end{proof}

\subsection{On the $p$-adic Gamma function}

For every prime $p$, we write $\Gamma_p$ for the $p$-adic Gamma function, so that, for all natural integers $n$, we have
$$
\Gamma_p(n)=(-1)^n\underset{p\nmid \lambda}{\prod_{\lambda=1}^{n-1}}\lambda.
$$
The aim of this section is to prove Proposition \ref{propo Gamma}.

\begin{proof}[Proof of Proposition \ref{propo Gamma}]
Let $p$ be a fixed prime number. For all natural integers $n$ and $m$, we have
\begin{align}
\frac{\Gamma_p\big((m+n)p\big)}{\Gamma_p(mp)\Gamma_p(np)}&=\Bigg(\underset{p\nmid \lambda}{\prod_{\lambda=np}^{(m+n)p}}\lambda\Bigg)/\Bigg(\underset{p\nmid \lambda}{\prod_{\lambda=1}^{mp}}\lambda\Bigg)\notag\\
&=\Bigg(\underset{p\nmid \lambda}{\prod_{\lambda=1}^{mp}}(np+\lambda)\Bigg)/\Bigg(\underset{p\nmid \lambda}{\prod_{\lambda=1}^{mp}}\lambda\Bigg)\notag\\
&=\underset{p\nmid \lambda}{\prod_{\lambda=1}^{mp}}\Bigg(1+\frac{np}{\lambda}\Bigg).\label{Etape Gamma}
\end{align}

Let $X,T_1,\dots,T_m$ be $m+1$ variables. Then, we have
$$
\prod_{j=1}^m(X-T_j)=X^m+\sum_{i=1}^\infty(-1)^i\sigma_{m,i}(T_1,\dots,T_m)X^{m-i}.
$$
Therefore, we obtain that
\begin{align}
\underset{p\nmid\lambda}{\prod_{\lambda=1}^{mp}}\left(1+\frac{n p}{\lambda}\right)&=\prod_{k=1}^{p-1}\prod_{\omega=0}^{m-1}\left(1+\frac{n p}{k+\omega p}\right)\notag\\
&=\prod_{k=1}^{p-1}\left(1+\sum_{i=1}^{\infty}(-1)^i\sigma_{m,i}\left(\frac{-np}{k},\cdots,\frac{-np}{k+(m-1)p}\right)\right)\notag\\
&=\prod_{k=1}^{p-1}\left(1+\sum_{i=1}^{\infty}(-1)^in^{i}p^{i}\Psi_{p,k,i}(m)\right).\label{fond5}
\end{align}
Let $k$ in $\{1,\dots,p-1\}$ be fixed. By \eqref{fou}, for all positive integers $i$, there exists a sequence $(P_{i,\ell})_{\ell\geq0}$ of polynomial functions with coefficients in $\mathbb{Z}_p$ which converges pointwise to $i!\Psi_{p,k,i}$. We fix a natural integer $K$. For all positive integers $N$, we set
$$
f_N(x,y):=1+\sum_{i=1}^{K+1}(-1)^ix^i\frac{p^i}{i!}P_{i,N}(y).
$$
If $n$ and $m$ belong to $\{0,\dots,K\}$, then we have
\begin{align*}
R_N&:=1+\sum_{i=1}^\infty (-1)^in^ip^i\Psi_{p,k,i}(m)-f_N(n,m)\\
&=\sum_{i=1}^{K+1}(-1)^in^i\frac{p^i}{i!}\big(i!\Psi_{p,k,i}(m)-P_{i,N}(m)\big)\underset{N\rightarrow\infty}{\longrightarrow}0.
\end{align*}
Furthermore, we have $f_N(x,y)\in 1+p\mathbb{Z}_p[x,y]$. Indeed, if $i=i_0+i_1p+\cdots+i_ap^a$ with $i_j$ in $\{0,\dots,p-1\}$, then we set $\mathfrak{s}_p(i):=i_0+\cdots+i_a$ so that, for all positive integers $i$, we have
$$
i-v_p(i!)=i-\frac{i-\mathfrak{s}_p(i)}{p-1}=\frac{i(p-2)+\mathfrak{s}_p(i)}{p-1}>0.
$$
Hence, by \eqref{fond5}, we obtain that there exists a function $g$ in $\mathfrak{F}_p^2$ such that, for all natural integers $n$ and $m$, we have
$$
\underset{p\nmid\lambda}{\prod_{\lambda=1}^{mp}}\left(1+\frac{n p}{\lambda}\right)=1+g(n,m)p,
$$
which, together with \eqref{Etape Gamma}, finishes the proof of Proposition \ref{propo Gamma}. 
\end{proof}

\subsection{Last step in the proof of Theorem \ref{theo gene}}

Let $\mathfrak{A}$ be the $\mathbb{Z}_p$-module spanned by $\mathfrak{S}_{e,f}$. We set $\mathfrak{B}=\{\mathfrak{S}_{e,f}^g,\:\,g\in\mathfrak{F}_p^d\}$. We shall prove that $\mathfrak{S}_{e,f}$ and $\mathfrak{B}$ satisfy Condition $(a)$ in Proposition \ref{propo Ap}. 

Obviously, $\mathfrak{B}$ is constituted of $\mathbb{Z}_p$-valued sequences and $\mathfrak{A}$ is a subset of $\mathfrak{B}$. For all $\mathbf{a}$ in $\{0,\dots,p-1\}^d$ and $\mathbf{m}$ in $\mathbb{N}^d$, we have
$$
\mathcal{Q}_{e,f}(\mathbf{a}+\mathbf{m}p)=\frac{\prod_{i=1}^u(\mathbf{e}_i\cdot\mathbf{m}p)!\prod_{k=1}^{\mathbf{e}_i\cdot\mathbf{a}}(\mathbf{e}_i\cdot\mathbf{m}p+k)}{\prod_{i=1}^v(\mathbf{f}_i\cdot\mathbf{m}p)!\prod_{k=1}^{\mathbf{f}_i\cdot\mathbf{a}}(\mathbf{f}_i\cdot\mathbf{m}p+k)}.
$$
For every natural integer $n$, we have 
$$
\frac{(np)!}{n!}=p^n(-1)^{np}\Gamma_p(np),
$$
so that we have
$$
\frac{\prod_{i=1}^u(\mathbf{e}_i\cdot\mathbf{m}p)!}{\prod_{i=1}^v(\mathbf{f}_i\cdot\mathbf{m}p)!}=p^{(|e|-|f|)\cdot\mathbf{m}}\mathcal{Q}_{e,f}(\mathbf{m})\frac{\prod_{i=1}^u(-1)^{\mathbf{e}_i\cdot\mathbf{m}p}\Gamma_p(\mathbf{e}_i\cdot\mathbf{m}p)}{\prod_{i=1}^v(-1)^{\mathbf{f}_i\cdot\mathbf{m}p}\Gamma_p(\mathbf{f}_i\cdot\mathbf{m}p)}.
$$
Furthermore, we have
\begin{multline*}
\frac{\prod_{i=1}^u\prod_{k=1}^{\mathbf{e}_i\cdot\mathbf{a}}(\mathbf{e}_i\cdot\mathbf{m}p+k)}{\prod_{i=1}^v\prod_{k=1}^{\mathbf{f}_i\cdot\mathbf{a}}(\mathbf{f}_i\cdot\mathbf{m}p+k)}\\
=\frac{\prod_{i=1}^u\prod_{k=1,p\nmid k}^{\mathbf{e}_i\cdot\mathbf{a}}(\mathbf{e}_i\cdot\mathbf{m}p+k)}{\prod_{i=1}^v\prod_{k=1,p\nmid k}^{\mathbf{f}_i\cdot\mathbf{a}}(\mathbf{f}_i\cdot\mathbf{m}p+k)}\cdot p^{\Delta_{e,f}(\mathbf{a}/p)}\frac{\prod_{i=1}^u\prod_{k=1}^{\lfloor\mathbf{e}_i\cdot\mathbf{a}/p\rfloor}(\mathbf{e}_i\cdot\mathbf{m}+k)}{\prod_{i=1}^v\prod_{k=1}^{\lfloor\mathbf{f}_i\cdot\mathbf{a}/p\rfloor}(\mathbf{f}_i\cdot\mathbf{m}+k)}.
\end{multline*}
Since $|e|=|f|$, we have
$$
\frac{\prod_{i=1}^u(-1)^{\mathbf{e}_i\cdot\mathbf{m}p}\Gamma_p(\mathbf{e}_i\cdot\mathbf{m}p)}{\prod_{i=1}^v(-1)^{\mathbf{f}_i\cdot\mathbf{m}p}\Gamma_p(\mathbf{f}_i\cdot\mathbf{m}p)}=\frac{\prod_{i=1}^u\Gamma_p(\mathbf{e}_i\cdot\mathbf{m}p)}{\prod_{i=1}^v\Gamma_p(\mathbf{f}_i\cdot\mathbf{m}p)}.
$$
Let $\alpha_1,\dots,\alpha_d$ be natural integers with $\alpha_{i_0}\geq 1$ for some $i_0$ in $\{1,\dots,d\}$. By Proposition~\ref{propo Gamma}, there exists a function $h$ in $\mathfrak{F}_p^d$ such that, for all natural integers $m_1,\dots,m_d$, we have 
$$
\frac{\Gamma_p\big((\alpha_1m_1+\cdots+\alpha_dm_d)p\big)}{\Gamma_p\big((\alpha_1m_1+\cdots+(\alpha_{i_0}-1)m_{i_0}+\cdots+\alpha_dm_d)p\big)\Gamma_p(m_{i_0}p)}=1+h(m_1,\dots,m_d)p.
$$
Hence, there exists a function $h'$ in $\mathfrak{F}_p^d$ such that, for all natural integers $m_1,\dots,m_d$, we have
$$
\frac{\Gamma_p\big((\alpha_1m_1+\cdots+\alpha_dm_d)p\big)}{\Gamma_p(m_1p)^{\alpha_1}\cdots\Gamma_p(m_dp)^{\alpha_d}}=1+h'(m_1,\dots,m_d)p.
$$
Since $f$ is only constituted by vectors $\mathbf{1}_k$, there exists $g'$ in $\mathfrak{F}_p^d$ such that, for all $\mathbf{m}$ in $\mathbb{N}^d$, we have 
$$
\frac{\prod_{i=1}^u\Gamma_p(\mathbf{e}_i\cdot\mathbf{m}p)}{\prod_{i=1}^v\Gamma_p(\mathbf{f}_i\cdot\mathbf{m}p)}=1+g'(\mathbf{m})p.
$$
Furthermore, if $k$ is an integer coprime with $p$, and $\mathbf{d}$ a vector in $\mathbb{N}^d$, then for every $\mathbf{m}$ in $\mathbb{N}^d$, we have
$$
\frac{1}{\mathbf{d}\cdot\mathbf{m}p+k}=\sum_{s=0}^\infty(-1)^s\frac{(\mathbf{d}\cdot\mathbf{m})^s}{k^{s+1}}p^s,
$$
so that there is a function $g''$ in $\mathfrak{F}_p^d$ such that, for all $\mathbf{m}$ in $\mathbb{N}^d$, we have 
$$
\frac{1}{\mathbf{d}\cdot\mathbf{m}p+k}=\frac{1}{k}+g''(\mathbf{m})p.
$$
Hence, for all $\mathbf{a}$ in $\{0,\dots,p-1\}^d$, there exist a $p$-adic integer $\lambda_{\mathbf{a}}$ and a function $g_{\mathbf{a}}$ in $\mathfrak{F}_p^d$ such that, for all $\mathbf{m}$ in $\mathbb{N}^d$, we have
$$
\frac{\prod_{i=1}^u\prod_{k=1,p\nmid k}^{\mathbf{e}_i\cdot\mathbf{a}}(\mathbf{e}_i\cdot\mathbf{m}p+k)}{\prod_{i=1}^v\prod_{k=1,p\nmid k}^{\mathbf{f}_i\cdot\mathbf{a}}(\mathbf{f}_i\cdot\mathbf{m}p+k)}=\lambda_{\mathbf{a}}+g_{\mathbf{a}}(\mathbf{m})p.
$$
Since $f$ is only constituted by vectors $\mathbf{1}_k$, for all $i$ in $\{1,\dots,v\}$, we have $\lfloor\mathbf{f}_i\cdot\mathbf{a}/p\rfloor=0$. Thereby, for all $\mathbf{a}$ in $\{0,\dots,p-1\}^d$, there exists a function $h_{\mathbf{a}}$ in $\mathbb{Z}_p+p\mathfrak{F}_p^d$, such that, for all $\mathbf{m}$ in $\mathbb{N}^d$, we have
$$
\mathcal{Q}_{e,f}(\mathbf{a}+\mathbf{m}p)=\mathcal{Q}_{e,f}(\mathbf{m})h_{\mathbf{a}}(\mathbf{m})p^{\Delta_{e,f}(\mathbf{a}/p)}\prod_{i=1}^u\prod_{k=1}^{\lfloor\mathbf{e}_i\cdot\mathbf{a}/p\rfloor}(\mathbf{e}_i\cdot\mathbf{m}+k).
$$

Furthermore, if $\lfloor\mathbf{e}_i\cdot\mathbf{a}/p\rfloor\geq 1$ for some $i$, then $\Delta_{e,f}(\mathbf{a}/p)\geq 1$. Hence we obtain that
$$
\mathbf{m}\mapsto p^{\Delta_{e,f}(\mathbf{a}/p)}\prod_{i=1}^u\prod_{k=1}^{\lfloor\mathbf{e}_i\cdot\mathbf{a}/p\rfloor}(\mathbf{e}_i\cdot\mathbf{m}+k)\in\mathbb{Z}_p+p\mathfrak{F}_p^d.
$$ 
Let $g$ be a function in $\mathfrak{F}_p^d$. For all $\mathbf{a}$ in $\{0,\dots,p-1\}^d$ and $\mathbf{m}$ in $\mathbb{N}^d$, we set 
$$
\tau_{\mathbf{a}}(\mathbf{m}):=g(\mathbf{a}+\mathbf{m}p)h_{\mathbf{a}}(\mathbf{m})p^{\Delta_{e,f}(\mathbf{a}/p)}\prod_{i=1}^u\prod_{k=1}^{\lfloor\mathbf{e}_i\cdot\mathbf{a}/p\rfloor}(\mathbf{e}_i\cdot\mathbf{m}+k), 
$$
so that $\tau_{\mathbf{a}}\in\mathbb{Z}_p+p\mathfrak{F}_p^d$. Therefore, for all $v$ in $\{0,\dots,p-1\}$ and $n$ in $\mathbb{N}$, we have
\begin{align*}
\mathfrak{S}_{e,f}^g(v+np)&=\sum_{\mathbf{0}\leq\mathbf{a}\leq\mathbf{1}(p-1)}\underset{|\mathbf{a}+\mathbf{m}p|=v+np}{\sum_{\mathbf{m}\geq\mathbf{0}}}g(\mathbf{a}+\mathbf{m}p)\mathcal{Q}_{e,f}(\mathbf{a}+\mathbf{m}p)\\
&=\sum_{\mathbf{0}\leq\mathbf{a}\leq\mathbf{1}(p-1)}\underset{|\mathbf{a}+\mathbf{m}p|=v+np}{\sum_{\mathbf{m}\geq\mathbf{0}}}\mathcal{Q}_{e,f}(\mathbf{m})\tau_{\mathbf{a}}(\mathbf{m}).
\end{align*}
If $|\mathbf{a}+\mathbf{m}p|=v+np$, then we have $|\mathbf{a}|=v+jp$ with 
$$
0\leq j\leq\min\left(n,\left\lfloor\frac{d(p-1)-v}{p}\right\rfloor\right)=:M. 
$$
Furthermore, we have $\lfloor|\mathbf{a}|/p\rfloor=j$ and there is $k$ in $\{1,\dots,d\}$ such that $\mathbf{a}^{(k)}\geq (v+jp)/d$. Since $e$ is $2$-admissible and $f$ is constituted by vectors $\mathbf{1}_k$, we obtain that 
$$
\Delta_{e,f}(\mathbf{a}/p)=\sum_{i=1}^u\left\lfloor\frac{\mathbf{e}_i\cdot\mathbf{a}}{p}\right\rfloor\geq 2j. 
$$
In particular, there is $\tau_{\mathbf{a}}'$ in $\mathfrak{F}_p^d$ such that $\tau_{\mathbf{a}}=p^{2j}\tau_{\mathbf{a}}'$. Hence, for all $\mathbf{a}$ in $\{0,\dots,p-1\}^d$, we have
$$
\mathfrak{S}_{e,f}^g(v+np)=\underset{|\mathbf{a}|=v}{\sum_{\mathbf{0}\leq\mathbf{a}\leq\mathbf{1}(p-1)}}\sum_{|\mathbf{m}|=n}\mathcal{Q}_{e,f}(\mathbf{m})\tau_{\mathbf{a}}(\mathbf{m})
+\sum_{j=1}^Mp^{2j}\underset{|\mathbf{a}|=v+jp}{\sum_{\mathbf{0}\leq\mathbf{a}\leq\mathbf{1}(p-1)}}\sum_{|\mathbf{m}|=n-j}\mathcal{Q}_{e,f}(\mathbf{m})\tau_{\mathbf{a}}'(\mathbf{m}).
$$
Therefore, there exist $A'$ in $\mathfrak{A}$ and a sequence $(B_k)_{k\geq 0}$, with $B_k$ in $\mathfrak{B}$, such that
$$
\mathfrak{S}_{e,f}^g(v+np)=A'(n)+pB_0(n)+\sum_{k=1}^\infty p^{k+1}B_k(n-k).
$$
This shows that $\mathfrak{S}_{e,f}$ and $\mathfrak{B}$ satisfy Condition $(a)$ in Proposition \ref{propo Ap}, so that Theorem \ref{theo gene} is proved.
$\hfill\square$

\address{E. Delaygue, Institut Camille Jordan, Universit\'e Claude Bernard Lyon 1, 43 boulevard du 11 Novembre 1918, 69622 Villeurbanne cedex, France. Email: delaygue@math.univ-lyon1.fr}

\end{document}